\patchcmd{\@citex}{\if@filesw}{\getcitekey\@citeb \if@filesw}%
    {\typeout{*** SUCCESS ***}}{\typeout{*** FAIL ***}}
\patchcmd{\nocite}{\if@filesw}{\getcitekey\@citeb \if@filesw}%
    {\typeout{*** SUCCESS ***}}{\typeout{*** FAIL ***}}
\tikzset{commutative diagrams/arrow style=Latin Modern}
\newcommand{\derR}{\mathbf{R}}
\newcommand{\tensor}{\otimes}
\newcommand{\NN}{\mathbb{N}}
\newcommand{\menge}[2]{\bigl\{ \thinspace #1 \thinspace\thinspace \big\vert%
\thinspace\thinspace #2 \thinspace \bigr\}}
\DeclareMathOperator{\im}{im}
\DeclareMathOperator{\id}{id}
\DeclareMathOperator{\Supp}{Supp}
\DeclareMathOperator{\codim}{codim}
\DeclareMathOperator{\End}{End}
\DeclareMathOperator{\Hom}{Hom}
\DeclareMathOperator{\Pic}{Pic}
\newcommand{\define}[1]{\emph{#1}}
\newcommand{\gl}{\mathfrak{gl}}
\newcommand{\shf}[1]{\mathscr{#1}}
\newcommand{\OX}{\shf{O}_X}
\newcommand{\defeq}{\underset{\textrm{def}}{=}}
\newcommand{\restr}[1]{\big\vert_{#1}}
\def\overbar#1#2#3{{%
	\setbox0=\hbox{\displaystyle{#1}}%
	\dimen0=\wd0
	\advance\dimen0 by -#2 
	\vbox {\nointerlineskip \moveright #3 \vbox{\hrule height 0.3pt width \dimen0}%
		\nointerlineskip \vskip 1.5pt \box0}%
}}
\newcommand{\fu}{f^{\ast}}
\newcommand{\fl}{f_{\ast}}
\newcommand{\iu}{i^{\ast}}
\newcommand{\pu}{p^{\ast}}
\newcommand{\pl}{p_{\ast}}
\newcommand{\qu}{q^{\ast}}
\newcommand{\tl}{t_{\ast}}
\newcommand{\gu}{g^{\ast}}
\newcommand{\varphiu}{\varphi^{\ast}}
\newcommand{\varphil}{\varphi_{\ast}}
\newcommand{\shF}{\shf{F}}
\newcommand{\shG}{\shf{G}}
\newcommand{\shO}{\shf{O}}
\newcommand{\shFm}{\shF}
\let\@@seccntformat\@seccntformat
\renewcommand*{\@seccntformat}[1]{%
  \expandafter\ifx\csname @seccntformat@#1\endcsname\relax
    \expandafter\@@seccntformat
  \else
    \expandafter
      \csname @seccntformat@#1\expandafter\endcsname
  \fi
    {#1}%
}
\newcommand*{\@seccntformat@subsection}[1]{%
  \textbf{\csname the#1\endcsname.}
}
\let\@paragraph\paragraph
\renewcommand*{\paragraph}[1]{%
	\vspace{0.3\baselineskip}%
	\@paragraph{\textit{#1}}%
}
\newtheorem{theorem}[equation]{Theorem}
\newtheorem*{theorem*}{Theorem}
\newtheorem{lemma}[equation]{Lemma}
\newtheorem*{lemma*}{Lemma}
\newtheorem{corollary}[equation]{Corollary}
\newtheorem*{corollary*}{Corollary}
\newtheorem{proposition}[equation]{Proposition}
\newtheorem*{proposition*}{Proposition}
\newtheorem*{conjecture*}{Conjecture}
\theoremstyle{definition}
\newtheorem{definition}[equation]{Definition}
\newtheorem*{definition*}{Definition}
\theoremstyle{remark}
\newtheorem*{remark}{Remark}
\newtheorem{example}[equation]{Example}
\newtheorem*{example*}{Example}
\newtheorem*{problem*}{Problem}
\newtheorem*{note}{Note}
\theoremstyle{plain}
\newcommand{\theoremref}[1]{\hyperref[#1]{Theorem~\ref*{#1}}}
\newcommand{\lemmaref}[1]{\hyperref[#1]{Lemma~\ref*{#1}}}
\newcommand{\definitionref}[1]{\hyperref[#1]{Definition~\ref*{#1}}}
\newcommand{\propositionref}[1]{\hyperref[#1]{Proposition~\ref*{#1}}}
\newcommand{\conjectureref}[1]{\hyperref[#1]{Conjecture~\ref*{#1}}}
\newcommand{\corollaryref}[1]{\hyperref[#1]{Corollary~\ref*{#1}}}
\newcommand{\exampleref}[1]{\hyperref[#1]{Example~\ref*{#1}}}
\newcommand{\exerciseref}[1]{\hyperref[#1]{Exercise~\ref*{#1}}}
\let\old@caption\caption
\renewcommand*{\caption}[1]{%
	\setcounter{figure}{\value{equation}}%
	\stepcounter{equation}%
	\old@caption{#1}\relax%
}
\newcounter{intro}
\newtheorem{intro-conjecture}[intro]{Conjecture}
\newtheorem{intro-corollary}[intro]{Corollary}
\newtheorem{intro-theorem}[intro]{Theorem}
\newcommand{\OA}{\mathscr{O}_A}
\newcommand{\omY}{\omega_Y}
\newcommand{\OY}{\shO_Y}
\newcommand{\parref}[1]{\hyperref[#1]{\S\ref*{#1}}}
\newcommand{\chapref}[1]{\hyperref[#1]{Chapter~\ref*{#1}}}
\newcommand*\if@single[3]{%
  \setbox0\hbox{${\mathaccent"0362{#1}}^H$}%
  \setbox2\hbox{${\mathaccent"0362{\kern0pt#1}}^H$}%
  \ifdim\ht0=\ht2 #3\else #2\fi
  }
\newcommand*\rel@kern[1]{\kern#1\dimexpr\macc@kerna}
\newcommand*\widebar[1]{\@ifnextchar^{{\wide@bar{#1}{0}}}{\wide@bar{#1}{1}}}
\newcommand*\wide@bar[2]{\if@single{#1}{\wide@bar@{#1}{#2}{1}}{\wide@bar@{#1}{#2}{2}}}
\newcommand*\wide@bar@[3]{%
  \begingroup
  \def\mathaccent##1##2{%
    \if#32 \let\macc@nucleus\first@char \fi
    \setbox\z@\hbox{$\macc@style{\macc@nucleus}_{}$}%
    \setbox\tw@\hbox{$\macc@style{\macc@nucleus}{}_{}$}%
    \dimen@\wd\tw@
    \advance\dimen@-\wd\z@
    \divide\dimen@ 3
    \@tempdima\wd\tw@
    \advance\@tempdima-\scriptspace
    \divide\@tempdima 10
    \advance\dimen@-\@tempdima
    \ifdim\dimen@>\z@ \dimen@0pt\fi
    \rel@kern{0.6}\kern-\dimen@
    \if#31
      \overline{\rel@kern{-0.6}\kern\dimen@\macc@nucleus\rel@kern{0.4}\kern\dimen@}%
      \advance\dimen@0.4\dimexpr\macc@kerna
      \let\final@kern#2%
      \ifdim\dimen@<\z@ \let\final@kern1\fi
      \if\final@kern1 \kern-\dimen@\fi
    \else
      \overline{\rel@kern{-0.6}\kern\dimen@#1}%
    \fi
  }%
  \macc@depth\@ne
  \let\math@bgroup\@empty \let\math@egroup\macc@set@skewchar
  \mathsurround\z@ \frozen@everymath{\mathgroup\macc@group\relax}%
  \macc@set@skewchar\relax
  \let\mathaccentV\macc@nested@a
  \if#31
    \macc@nested@a\relax111{#1}%
  \else
    \def\gobble@till@marker##1\endmarker{}%
    \futurelet\first@char\gobble@till@marker#1\endmarker
    \ifcat\noexpand\first@char A\else
      \def\first@char{}%
    \fi
    \macc@nested@a\relax111{\first@char}%
  \fi
  \endgroup
}
   \def\MR#1{}
\newcommand\sO{{\mathcal O}}
\newcommand\sI{{\mathcal I}}
\renewcommand{\shG}{\mathscr{G}}
\renewcommand{\shF}{\mathscr{F}}
\newcommand{\shQ}{\mathscr{Q}}
\newcommand{\shGt}{\widetilde{\shG}}
\renewcommand{\gl}{g_{\ast}}
\newcommand{\Kb}{K^{\bullet}}
\newcommand{\omXm}{\omX^{\tensor m}}
\newcommand{\omX}{\omega_X}
\newcommand{\omXY}{\omega_{X/Y}}
\newcommand{\omXYm}{\omXY^{\tensor m}}
\newcommand{\shI}{\mathcal{I}}
\renewcommand{\gl}{g_{\ast}}
\newcommand{\D}{{\bf D}}
\renewcommand{\restr}[1]{\vert_{#1}}
\begin{document}

\title[Pushforwards of pluricanonical bundles]%
	{Pushforwards of pluricanonical bundles under morphisms to abelian varieties}
	
\author[L.~Lombardi]{Luigi Lombardi}
\address{Dipartimento di Matematica ``Federigo Enriques'', Università Statale di Milano, 
Via Cesare Saldini, 20133 Milano, Italy}
\email{luigi.lombardi@unimi.it}

\author[M.~Popa]{Mihnea Popa}
\address{Department of Mathematics, Northwestern University,
2033 Sheridan Road, Evanston, IL 60208, USA} 
\email{mpopa@math.northwestern.edu}

\author[Ch.~Schnell]{Christian Schnell}
\address{Department of Mathematics, Stony Brook University, Stony Brook, NY 11794-3651}
\email{christian.schnell@stonybrook.edu}

\begin{abstract}
Let $f \colon X \to A$ be a morphism from a smooth projective variety to an abelian
variety (over the field of complex numbers). We show that the sheaves $\fl
\omXm$ become globally generated after pullback by an isogeny.  We use this to deduce
a decomposition theorem for these sheaves when $m \ge 2$, analogous to that obtained
by Chen-Jiang when $m = 1$. This is in turn applied to effective results for
pluricanonical linear series on irregular varieties with canonical singularities.

\keywords{Direct images, abelian varieties, non-vanishing loci, singular hermitian metrics, pluricanonical systems}
\end{abstract}
\date{\today}
\maketitle
\section{Introduction}
 \subsection{Results}

There are many results in the literature about pushforwards of canonical bundles
under morphisms to abelian varieties. The purpose of this paper is to extend these
results to the case of pluricanonical bundles.

Let us briefly summarize what is known about pushforwards of canonical
bundles. Let $f \colon X \to A$ be any morphism from a smooth projective variety $X$
to an abelian variety $A$, defined over the field of complex numbers. The pushforward sheaf
$\fl \omX$ is a coherent sheaf on $A$ with the following remarkable
properties:
\begin{enumerate}
\item By \cite{GL2,Simpson} (cf. also \cite[Corollary~17.1]{PPS}), the cohomology support loci
\[
	V_{\ell}^i(A, \fl \omX) = \menge{\alpha \in \Pic^0(A)}%
		{\dim H^i(A, \alpha \tensor \fl \omX) \geq \ell}
\]
are finite unions of torsion subvarieties (= abelian subvarieties translated by
torsion points) of the dual abelian variety $\Pic^0(A)$.
\item By \cite{GL1,Hacon:GV}, $\fl \omX$ is a GV-sheaf, meaning that
\[
	\codim_{\Pic^0(A)} V^i(A, \fl \omX) \geq i 
\]
for all $i \geq 0$, where $V^i(A, \fl \omX): = V^i_1(A, \fl \omX)$; see also \S\ref{scn:FM}.
\item By \cite{ChenJiang:positivity,PPS}, one has a canonical decomposition
\[
	\fl \omX \cong \bigoplus_{i \in I} \bigl( \alpha_i \tensor \qu_i \shF_i \bigr),
\]
into pullbacks of M-regular coherent sheaves (see Definition 3.1) $\shF_i$ from quotients $q_i \colon A
\to A_i$ of the abelian variety, tensored by torsion line bundles $\alpha_i \in
\Pic^0(A)$.
\item By \cite{GL2,LPS}, the Fourier-Mukai transform of $\fl \omX$, which is a complex of
coherent sheaves on the dual abelian variety $\Pic^0(A)$, is locally analytically
quasi-isomorphic to a linear complex.
\end{enumerate}

\begin{note}
The same is true for the higher direct images of the canonical bundle: the properties
above are inherited by direct summands, and by a theorem of Koll\'ar, the sheaf $R^i
\fl \omX$ is isomorphic to a direct summand in the pushforward of the canonical
bundle from the intersection of $i$ sufficiently ample hypersurfaces in $X$.
\end{note}

The question that motivated this work is what happens for pushforwards of
pluricanonical bundles $\fl \omXm$ with $m \geq 2$. Throughout the paper, we work
over the field of complex numbers. It was already known
that $\fl \omXm$ is always a GV-sheaf \cite[Theorem~1.10]{pluricanonical}, and that the $0$-th
cohomology support loci $V_{\ell}^0(A, \fl \omXm)$ are always finite unions of torsion
subvarieties (cf. \cite[Theorem~3.5]{Lai} and \cite[Theorem~10.1]{HPS}). Our
main result is that, in fact, \emph{all} the
results about pushforwards of canonical bundles carry over to the case of
pluricanonical bundles.  More precisely, we prove the following theorem.

\begin{intro-theorem} \label{thm:main-summand}
Let $f \colon X \to A$ be a morphism from a smooth projective variety to an abelian
variety. Then for each $m \geq 2$, there is a
generically finite morphism $g_m \colon X_m \to X$ from a smooth projective variety
$X_m$, 
\[
\begin{tikzcd}
X_m \rar{g_m} \arrow[bend right=40]{rr}{f \circ g_m} & X \rar{f} & A
\end{tikzcd}
\]
such that $\fl \omXm$ is isomorphic to a direct summand in $(f \circ g_m)_{\ast}
\omega_{X_m}$.
\end{intro-theorem}

In particular, properties (i)-(iv) above continue to hold: all cohomology support loci of $\fl \omXm$ are finite unions of
torsion subvarieties; the Fourier-Mukai transform of $\fl \omXm$ is locally computed
by a linear complex; $\fl \omXm$ decomposes into a sum of pullbacks of M-regular
sheaves tensored by torsion line bundles; etc. (See \S\ref{scn:consequences} for
a detailed discussion.)

\begin{note}
Let us briefly explain why one should expect Theorem \ref{thm:main-summand} to be true.
Suppose for simplicity that our smooth projective variety $X$ is a good
minimal model, meaning that some power of the canonical bundle $\omX$ is globally
generated. Fix an integer $N \geq m$ such that $\omX^{\tensor N}$ is globally
generated; then
\[
	\omX^{\tensor N} \cong \OX(D),
\]
where $D$ is the divisor of a general section, and therefore irreducible and smooth.
Let $g \colon Y \to X$ be the branched covering obtained by extracting
$N$-th roots; then
\[
	\gl \omY \cong \bigoplus_{k=1}^N \omX^{\tensor k},
\]
and so $\fl \omXm$ is isomorphic to a direct summand in $(f \circ g)_{\ast} \omY$.
(One can still get the same conclusion under the weaker assumption that $X$ admits a good
minimal model with at worst terminal singularities; the abundance conjecture predicts
that good minimal models exist for all smooth projective varieties that are not
uniruled.)
\end{note}

Fortunately, we need much less information about $\omX$ to draw this conclusion.
More precisely, we obtain Theorem \ref{thm:main-summand} as a consequence of the
following result, which roughly says that pushforwards of pluricanonical bundles are
``almost'' globally generated.  For $m = 1$ and generically finite $f$, this is due
to Chen and Jiang (see \cite[Theorem~4.1]{ChenJiang:positivity} and its proof); for $m = 1$ and
arbitrary $f$, it follows from \cite[Theorem~A]{PPS}; we recall the argument in
\S\ref{scn:m=1} below.

\begin{intro-theorem} \label{thm:main}
Let $f \colon X \to A$ be a morphism from a smooth projective variety to an abelian
variety. Then there exists an isogeny $\varphi \colon A'
\to A$ such that $\fl' \omega_{X'}^{\tensor m} \cong \varphiu \bigl( \fl \omXm
\bigr)$ is globally generated for every $m \geq 1$.
\[
\begin{tikzcd}
X' \dar{f'} \rar & X \dar{f} \\
A' \rar{\varphi} & A
\end{tikzcd}
\]
\end{intro-theorem}

Once we know that $\fl \omXm$ becomes globally generated after pulling back by an
isogeny, we can use Viehweg's cyclic covering trick to show that it is isomorphic to
a direct summand in the pushforward of a canonical bundle (as in
Theorem \ref{thm:main-summand}); in fact, this is really a by-product of our proof of
Theorem \ref{thm:main}.

One consequence of Theorem \ref{thm:main-summand}, already alluded to above, is the
following decomposition theorem for pushforwards of pluricanonical bundles under
morphisms to abelian varieties. For $m = 1$, the existence of such a decomposition
was proved by Chen and Jiang \cite{ChenJiang:positivity} when $f$ is generically finite, and subsequently
in \cite{PPS} in general; the search for a generalization to the case $m \geq 2$ was
our starting point for this work.

\begin{intro-theorem}\label{thm:CJ}
In the setting of Theorem \ref{thm:main-summand}, there exists a finite decomposition
\[
	 \fl \omXm \cong \bigoplus_{i \in I} \bigl( \alpha_i \tensor \pu_i \shF_i \bigr),
\]
where each $p_i \colon A \to A_i$ is a quotient morphism with connected fibers to an
abelian variety, each $\shF_i$ is an M-regular coherent sheaf on $A_i$, and each
$\alpha_i \in \Pic^0(A)$ is a line bundle that becomes trivial when pulled back by
the isogeny in Theorem \ref{thm:main}.
\end{intro-theorem}

Since GV-sheaves are nef, whereas M-regular sheaves are ample
\cite[Corollary~3.2]{Debarre}, one can think of Theorem \ref{thm:CJ} as saying that
$\fl \omXm$ is semi-ample. With more work, one can prove a stronger positivity result
for pushforwards of pluricanonical bundles under the Albanese morphism, valid only
for $m \geq 2$. (This fits the general principle that, for any morphism $f \colon X \to
Y$ of smooth projective varieties, the sheaves $\fl \omXYm$ with $m \geq 2$
behave more uniformly than $\fl \omXY$.)

\begin{intro-theorem}\label{thm:complete_intro}
Let $f \colon X \to A_X$ be the Albanese morphism of a smooth projective variety, 
and let $m\geq 2$ be an integer. 
If $X \to Z$ is a smooth model of the Iitaka fibration of $X$, and $\psi \colon A_X \to A_Z$ is the 
induced morphism between the Albanese varieties, then there exists a finite direct sum decomposition
$$f_*\omega_X^{\otimes m} \; \cong \;\bigoplus_{i\in I} \big(\alpha_i \otimes \psi^*\shF_i\big),$$ 
where $\shF_i$ are coherent sheaves on $A_Z$ satisfying $\mathrm{IT}_0$ (see Definition 3.1), and $\alpha_i \in \Pic^0 (X)$ 
are torsion line bundles whose order can be bounded independently of $m$.
\end{intro-theorem}

The proof of Theorem \ref{thm:main} relies on an inductive procedure in terms of the
dimension of the target abelian variety $A$, using along the way the structure of the
$0$-th cohomological support locus of $\omega_X^{\otimes m}$, invariance of
plurigenera, and the fact that continuously globally generated sheaves become
globally generated after base change. We also crucially use new analytic results, 
introduced
into algebraic geometry by the recent work of Cao and P\u{a}un \cite{CP}, namely the
existence of positively curved singular hermitian metrics on pushforwards of relative
pluricanonical bundles. As a consequence of the Ohsawa-Takegoshi $L^2$-extension
theorem, these metrics have a ``minimal extension property'', which allows one to
split off trivial quotient line bundles; see \cite{HPS} for a detailed discussion
of this circle of ideas.

Regularity results for direct images of canonical bundles to abelian varieties, loosely meaning various versions 
of the GV or M-regularity property, 
can be used for studying  effective basepoint freeness and very ampleness statements for
pluricanonical linear series on smooth irregular varieties. When working with
singular varieties however, the canonical sheaf is typically not a line bundle, and
it becomes necessary to use pluricanonical bundles on resolutions instead. We give a
few such effective statements in \S\ref{scn:effective}, as an application of
Theorem \ref{thm:complete_intro}.  The general result is Theorem \ref{gg-index}; here
we only exemplify with a more easily stated result in the case of maximal Albanese
dimension, extending facts from \cite{PP4,PP2}.

\begin{intro-corollary}\label{cor:mad}
Let $Y$ be a normal projective variety of general type with at worst canonical singularities, and let $N$ be the Cartier index of $K_Y$, assumed to be $\ge 2$.  If the Albanese map $g\colon Y\rightarrow A$ is generically finite onto its image, then 
outside of the exceptional locus (the union of positive dimensional fibers) of $g$, $\shO_Y(2NK_Y)$ is globally generated 
and $\shO_Y(3NK_Y)$ is very ample.
\end{intro-corollary}

Note that this result is also true when $X$ is smooth (and so $N= 1$), as proved with considerable extra work in \cite{JLT}; the methods used here only give a weaker bound
when $N =1$. For a further discussion of the context regarding such results, please see \S\ref{scn:effective}.

\section{Preliminaries}

\subsection{Fourier-Mukai transform and generic vanishing}\label{scn:FM}

Let $A$ be an abelian variety, and $\widehat{A} \cong \Pic^0 (A)$ its dual. We denote
by 
\[
	\derR\widehat{\mathcal{S}}\colon \D(A)\rightarrow \D(\widehat{A}), \qquad
		\derR\widehat{\mathcal{S}} \shF \defeq \derR {p_2}_* (p_1^*\shF \otimes P) 
\]
the Fourier-Mukai functor induced by a normalized Poincar\'{e} bundle $P$ on
$A\times \widehat{A}$, where $p_1$ and $p_2$ are the projections onto $A$ and
$\widehat{A}$ respectively. We recall the following three conditions, ordered by
increasing strength:

\begin{definition}
A coherent sheaf $\shF$ on $A$ is said to:
\begin{enumerate}
\item be a \emph{$\mathrm{GV}$-sheaf} if ${\rm codim}~{\rm Supp}~R^i \widehat{\mathcal{S}} \shF \ge i$ for all $i > 0$.
\item be \emph{M-regular} if ${\rm codim}~{\rm Supp}~R^i \widehat{\mathcal{S}}\shF > i$ for all $i > 0$.
\item \emph{satisfy $\mathrm{IT}_0$} if $R^i \widehat{\mathcal{S}} \shF = 0$ for all $i > 0$.
\end{enumerate}
\end{definition}  

Denoting by 
$$V^i (A, \shF) \defeq \menge{\alpha \in \Pic^0(A)}{%
		\dim H^i(A, \shF \tensor \alpha) \neq 0}$$
the $i$-th cohomological support locus of $\shF$, it is a well-known consequence of standard base change
arguments that the conditions in the definition are equivalent to the same statements with $R^i \widehat{\mathcal{S}} \shF$
replaced by $V^i(A, \shF)$; of course, the $\mathrm{IT}_0$ condition means in this
case that $V^i(A, \shF)$ is empty for $i > 0$.

\begin{lemma}\label{Mreg_positive}
If $\shF \neq 0$ is an M-regular sheaf on an abelian variety $A$, then the Euler
characteristic $\chi (A, \shF) > 0$, and in particular $V^0(A, \shF) = \Pic^0 (A)$.
\end{lemma}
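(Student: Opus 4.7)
The plan is to first show that $V^0(A, \shF) = \Pic^0(A)$ using the continuous global generation property of M-regular sheaves, and then deduce $\chi(A, \shF) > 0$ by cohomology and base change applied to a generic point.

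For the first step I would invoke the theorem of Pareschi--Popa \cite{PP5} that every M-regular sheaf on an abelian variety is continuously globally generated: for every nonempty open subset $U \subseteq \Pic^0(A)$, the evaluation map
$$\bigoplus_{\alpha \in U} H^0(A, \shF \otimes \alpha) \otimes P_{\alpha}^{-1} \longrightarrow \shF$$
is surjective. Since $\shF \neq 0$, some summand must be nonzero, so for every nonempty open $U$ there exists $\alpha \in U$ with $H^0(A, \shF \otimes \alpha) \neq 0$. Hence $V^0(A, \shF)$ meets every open subset of $\Pic^0(A)$ and is dense; being closed by semicontinuity, $V^0(A, \shF) = \Pic^0(A)$.

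For the second step, M-regularity implies that each $V^i(A, \shF)$ with $i \geq 1$ is a proper closed subvariety of $\Pic^0(A)$ (indeed of codimension $> i$). Thus the complement $U := \Pic^0(A) \setminus \bigcup_{i \geq 1} V^i(A, \shF)$ is a nonempty open subset. For $\alpha \in U$ we have $H^i(A, \shF \otimes \alpha) = 0$ for all $i \geq 1$, while the first step gives $H^0(A, \shF \otimes \alpha) \neq 0$. Since the Euler characteristic is invariant under twist by elements of $\Pic^0(A)$,
$$\chi(A, \shF) \;=\; \chi(A, \shF \otimes \alpha) \;=\; h^0(A, \shF \otimes \alpha) \;>\; 0.$$

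The only nontrivial input is the continuous global generation theorem of \cite{PP5}, and that is the main obstacle to a fully self-contained argument; alternatively, assuming for contradiction that $\chi = 0$ forces all $V^i(A, \shF)$ to be proper subsets of $\Pic^0(A)$, so that $\derR\widehat{\mathcal{S}}\shF$ has support in a proper closed subset, and one could then invoke the Pareschi--Popa/Hacon characterization \cite{PP3} of GV-sheaves (according to which the derived dual $\derR\mathcal{H}om(\derR\widehat{\mathcal{S}}\shF, \mathcal{O}_{\widehat{A}})[g]$ is a single torsion-free sheaf on $\widehat{A}$ of generic rank $\chi(A, \shF)$) together with Mukai inversion to deduce $\shF = 0$, a contradiction.
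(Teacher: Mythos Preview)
Your argument is correct, but you have inverted the paper's logical order. The paper simply cites \cite[Lemma~5.1]{PP5} for the inequality $\chi(A,\shF)>0$ and then deduces $V^0(A,\shF)=\Pic^0(A)$ from it: for general $\alpha$ the higher cohomology vanishes by M-regularity, so $h^0(A,\shF\otimes\alpha)=\chi(A,\shF)>0$, and upper semicontinuity of $h^0$ then forces $h^0>0$ everywhere. You instead first establish $V^0(A,\shF)=\Pic^0(A)$ via continuous global generation and then read off $\chi>0$ at a generic twist. Both routes rely on a Pareschi--Popa theorem as the essential input; yours has the advantage of actually \emph{deriving} the Euler-characteristic inequality rather than quoting it, at the cost of invoking the somewhat heavier CGG machinery (which in the paper's bibliography is \cite[Proposition~2.13]{PP4} rather than \cite{PP5}). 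Your alternative sketch via the torsion-freeness of $\derR\Delta\derR\widehat{\mathcal{S}}\shF$ is also valid and is in fact closer in spirit to how \cite[Lemma~5.1]{PP5} is proved.
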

\begin{proof}
The first assertion is \cite[Lemma~5.1]{PP5}. Since $\chi (A, \shF) = \chi (A, \shF \otimes \alpha)$ for all $\alpha \in \Pic^0 (A)$, the second follows from the vanishing of $H^i (A, \shF\otimes \alpha)$ for all $i > 0$ and $\alpha$ general.
\end{proof}

\begin{proposition}\label{locfreeh0}
Let $\shF$ be a  $\mathrm{GV}$-sheaf on an abelian variety $A$. If $\dim H^0(A,\shF\otimes
\alpha)$ is independent of $\alpha\in \Pic^0(A)$, then $\shF$ satisfies
$\mathrm{IT}_0$, and $\derR\widehat{\mathcal{S}}(\shF)$ is locally free.
\end{proposition}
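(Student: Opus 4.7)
The plan is to combine Hacon's derived-category characterization of GV-sheaves with Serre duality on $A$. Write $g=\dim A$ and let $R\Delta = R\mathcal{H}om_A(-,\omega_A[g])$ denote the Grothendieck--Serre duality functor. By Hacon's theorem (see \cite{Hacon,PP3}), the GV hypothesis on $\shF$ is equivalent to the complex $R\widehat{\mathcal{S}}(R\Delta\shF)$ being concentrated in a single cohomological degree, so that after an appropriate shift it reduces to a single coherent sheaf $\shG$ on $\widehat A$ placed in degree zero. My goal is to show that the hypothesis forces $\shG$ to be locally free; both claims of the proposition will follow from that.

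Derived base change for $R\widehat{\mathcal{S}}$ along the projection $p_2\colon A\times\widehat A\to\widehat A$ gives, for every $\alpha\in\widehat A=\Pic^0(A)$, a canonical isomorphism
$$\shG\otimes^{\derL} k(\alpha) \;\cong\; R\Gamma\bigl(A,\, R\Delta\shF\otimes\alpha\bigr) \;\cong\; R\Hom_A(\shF,\alpha)[g]$$
in $\D(k)$. Taking cohomology sheaves and applying Serre duality on $A$ (using $\omega_A\cong\shO_A$) identifies
$$\Tor^{\shO_{\widehat A}}_i\bigl(\shG,\, k(\alpha)\bigr) \;\cong\; \Ext^{g-i}_A(\shF,\alpha) \;\cong\; H^i(A,\shF\otimes\alpha^{-1})^\vee \qquad (i\ge 0).$$
In particular, for $i=0$, $\shG\otimes k(\alpha) \cong H^0(A,\shF\otimes\alpha^{-1})^\vee$.

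By hypothesis $\dim H^0(A,\shF\otimes\alpha)$ is independent of $\alpha$; replacing $\alpha$ by $\alpha^{-1}$, the fiber dimension $\dim\bigl(\shG\otimes k(\alpha)\bigr)$ is constant on the smooth, integral variety $\widehat A$. Hence $\shG$ is locally free, and its higher Tor sheaves against all residue fields vanish. Via the displayed identification, this forces $H^i(A,\shF\otimes\alpha^{-1})=0$ for every $i>0$ and every $\alpha\in\Pic^0(A)$, which is precisely the $\mathrm{IT}_0$ property for $\shF$. Once $\shF$ is known to be $\mathrm{IT}_0$, the Fourier--Mukai transform $R\widehat{\mathcal{S}}\shF = R^0\widehat{\mathcal{S}}\shF$ is a coherent sheaf, and Grauert's theorem applied to $p_2$ with the flat sheaf $p_1^*\shF\otimes P$ and the constant function $\alpha\mapsto h^0(A,\shF\otimes\alpha)$ shows that it is locally free.

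The one delicate step is keeping track of the cohomological shifts in Hacon's theorem so that $\shG$ really sits in degree zero and the $H^0$ of the derived fiber on the left unambiguously matches $H^0(A,\shF\otimes\alpha^{-1})^\vee$ on the right; once the conventions are pinned down, the rest of the argument runs formally.
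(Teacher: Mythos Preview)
Your proof is correct and follows essentially the same strategy as the paper: both use Hacon's characterization of GV-sheaves to know that $\derR\widehat{\mathcal{S}}(\derR\Delta\shF)$ is a single sheaf $\shG$ (up to shift), identify the fibers of $\shG$ with the duals of $H^0(A,\shF\otimes\alpha)$, and conclude that $\shG$ is locally free from the constancy hypothesis. The only cosmetic differences are that the paper computes the fibers via the Fourier--Mukai equivalence applied to $\Hom_A(\derR\Delta\shF,\alpha)$ rather than via derived base change, and that it quotes \cite[Theorem~5.4]{PP1} to pass from local freeness of $\shG$ to $\mathrm{IT}_0$, whereas you read this off directly from the vanishing of the higher $\Tor_i(\shG,k(\alpha))$.
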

\begin{proof}
Denote by $\derR\Delta$ the functor $\derR \mathcal{H}om_{A}(-,\shO_{A})$. By \cite[Corollary~3.10]{PP3} 
(cf. also \cite[Theorem 2.3]{PP2} for the shape of the statement used here) the complex $\derR \widehat{\mathcal{S}} \derR\Delta \shF$ is concentrated in degree $\dim A$, so we can set 
$ \derR \widehat{\mathcal{S}} \derR\Delta \shF\cong \shG[-\dim A]$ for some sheaf $\shG$ on $\widehat{A}$.  Using the sequence of isomorphisms
$$H^0(A,\shF\otimes \alpha)\cong \Hom_A(\derR\Delta \shF, \alpha)\cong \Hom_{\D(\widehat{A})}\big(\derR \widehat{\mathcal{S}}\derR\Delta \shF ,\derR\widehat{\mathcal{S}} \alpha \big)\cong $$ 
$$\cong \Hom_{\widehat{A}}(\shG,\shO_{\alpha^{-1}})\cong \Hom_{\mathbf{C}}\big(\shG\otimes \mathbf{C}(\alpha^{-1}),\mathbf{C} \big)$$
we see that the dimension of the fibers $\shG\otimes \mathbf{C}(\alpha)$ is constant for all $\alpha \in A$. It follows that $\shG$ is locally free and hence an $\infty$-syzygy sheaf.
The conclusion  follows now by  \cite[Theorem 5.4]{PP1}.
\end{proof}

\subsection{The Chen-Jiang decomposition property}\label{scn:CJ}

Chen and Jiang \cite[Theorem~1.1]{ChenJiang:positivity} have proved a decomposition theorem for the
pushforward of the canonical bundle under a generically finite morphism to an abelian
variety. This was extended to higher direct images via arbitrary morphisms from compact K\"ahler manifolds to 
compact complex tori in \cite[Theorem~A]{PPS}. In this section, we record a few basic observations about such
decompositions.

\begin{definition}
We say that a coherent sheaf $\shF$ on an abelian variety $A$ has the
\define{Chen-Jiang decomposition property} if $\shF$ admits a finite direct sum
decomposition
\begin{equation} \label{eq:CJ}
	\shF \cong \bigoplus_{i \in I} \bigl( \alpha_i \tensor \pu_i \shF_i \bigr),
\end{equation}
where each $A_i$ is an abelian variety, each $p_i \colon A \to A_i$ is a surjective
morphism with connected fibers, each $\shF_i$ is a nonzero M-regular coherent sheaf on
$A_i$, and each $\alpha_i \in \Pic^0(A)$ is a line bundle of finite order.
\end{definition}

\begin{remark}
By the projection formula, $\shF_i$ is a direct summand in 
\[
	(p_i)_{\ast} \bigl( \shF \tensor \alpha_i^{-1} \bigr).
\]
If $\shF = \fl \omX$ for a morphism $f \colon X \to A$ from a smooth projective
variety $X$, then the coherent sheaf $\shF_i$ is a direct summand in $(p_i \circ
f)_{\ast} \bigl( \omega_X \tensor \fu \alpha_i^{-1} \bigr)$, and therefore again
isomorphic to a direct summand in the pushforward of the canonical bundle from a
finite \'etale covering of $X$.
\end{remark}

\begin{lemma} \label{lem:CJ-V0}
If $\shF$ admits a Chen-Jiang decomposition as in \eqref{eq:CJ}, then
\[
	V^0(A, \shF) = \bigcup_{i \in I} \alpha_i^{-1} \tensor
		\im \Bigl( \pu_i \colon \Pic^0(A_i) \to \Pic^0(A) \Bigr).
\]
\end{lemma}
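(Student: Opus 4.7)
The plan is to unwind the direct sum decomposition and compute $H^0$ of each summand using the projection formula. Since direct sums commute with twisting and with $H^0$, we have
\[
V^0(A,\shF) = \bigcup_{i \in I} \menge{\alpha \in \Pic^0(A)}
   {H^0\bigl(A, \pu_i \shF_i \tensor (\alpha \tensor \alpha_i) \bigr) \neq 0},
\]
so it suffices to describe, for each fixed $i$, the set of $\beta \in \Pic^0(A)$ with $H^0(A, \pu_i \shF_i \tensor \beta) \neq 0$, and then translate by $\alpha_i^{-1}$.

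First I would apply the projection formula to get $(p_i)_{\ast}\bigl( \pu_i \shF_i \tensor \beta \bigr) \cong \shF_i \tensor (p_i)_{\ast} \beta$, which reduces the question to understanding $(p_i)_{\ast} \beta$. Since $p_i \colon A \to A_i$ is a surjective homomorphism of abelian varieties with connected fibers, it is a smooth morphism whose fibers are all translates of the connected abelian subvariety $K = \ker(p_i)$, and the restriction $\beta|_F$ to any fiber $F$ is (after translation) the same line bundle $\beta|_K \in \Pic^0(K)$. There is a short exact sequence
\[
0 \longrightarrow \pu_i \Pic^0(A_i) \longrightarrow \Pic^0(A) \longrightarrow \Pic^0(K) \longrightarrow 0,
\]
so $\beta|_K$ is trivial precisely when $\beta \in \pu_i \Pic^0(A_i)$. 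In that case, writing $\beta = \pu_i \gamma$, the projection formula plus $(p_i)_{\ast} \shO_A = \shO_{A_i}$ (connected fibers) yields $(p_i)_{\ast} \beta \cong \gamma$. Otherwise $H^0(F, \beta|_F) = 0$ on every fiber, and by cohomology and base change $(p_i)_{\ast} \beta = 0$.

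Combining, $H^0(A, \pu_i \shF_i \tensor \beta) \cong H^0(A_i, \shF_i \tensor \gamma)$ if $\beta = \pu_i \gamma$, and vanishes otherwise. Since $\shF_i$ is M-regular and nonzero, \lemmaref{Mreg_positive} gives $V^0(A_i, \shF_i) = \Pic^0(A_i)$, so $H^0(A_i, \shF_i \tensor \gamma) \neq 0$ for every $\gamma \in \Pic^0(A_i)$. Hence the set of $\beta$ contributing from the $i$-th summand is exactly $\pu_i \Pic^0(A_i)$, and substituting $\beta = \alpha \tensor \alpha_i$ gives the claimed formula.

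The only delicate step is the vanishing $(p_i)_{\ast} \beta = 0$ when $\beta \notin \pu_i \Pic^0(A_i)$, which is the main obstacle if one wants full rigor; it relies on the smoothness of $p_i$ (so base change applies without derived corrections in this degree) together with the fact that a nontrivial degree-zero line bundle on an abelian variety has no sections. Everything else is formal manipulation with the projection formula and M-regularity.
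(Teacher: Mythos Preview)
Your argument is correct and is precisely the ``easy computation, using the projection formula and the fact that $V^0(A_i,\shF_i)=\Pic^0(A_i)$'' that the paper's one-line proof alludes to; you have simply written out the details of that computation, including the case analysis for $(p_i)_{\ast}\beta$.
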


\begin{proof}
This is an easy computation, using the projection formula and the fact that \sloppy $V^0(A_i,
\shF_i) = \Pic^0(A_i)$ (by Lemma \ref{Mreg_positive}).
\end{proof}

In any Chen-Jiang decomposition as in \eqref{eq:CJ}, we can clearly collect terms
whose Fourier-Mukai transform has the same support, and arrange that 
\[
	\alpha_i^{-1} \tensor \pu_i \Pic^0(A_i) \neq \alpha_j^{-1} \tensor \pu_j \Pic^0(A_j)
\]
for $i \neq j$. We say that a Chen-Jiang decomposition is \define{reduced} if it has
this additional property. In the reduced case, we can define a partial ordering
$\leq$ on the index set $I$ by declaring that
\begin{equation} \label{eq:PO}
	i \leq j \quad \text{if and only if} \quad
		\alpha_i^{-1} \tensor \pu_i \Pic^0(A_i) 
			\subseteq \alpha_j^{-1} \tensor \pu_j \Pic^0(A_j).
\end{equation}
In more concrete terms, $i \leq j$ means that the morphism $p_i \colon A \to A_i$
factors through $p_j \colon A \to A_j$, and that $\alpha_j \tensor \alpha_i^{-1} \in
\pu_j \Pic^0(A_j)$. 

\begin{lemma} \label{lem:CJ}
Every reduced Chen-Jiang decomposition has the property that
\[
	\Hom \bigl( \alpha_j \tensor \pu_j \shF_j, 
		\alpha_i \tensor \pu_i \shF_i \bigr) = 0
\]
unless $j \leq i$.
\end{lemma}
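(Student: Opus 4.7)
My plan is to reduce the vanishing $\Hom(\alpha_j \tensor \pu_j \shF_j, \alpha_i \tensor \pu_i \shF_i) = 0$ to the containment of cohomological support loci
$$
V^0\bigl(A, \alpha_j \tensor \pu_j \shF_j\bigr) \subseteq V^0\bigl(A, \alpha_i \tensor \pu_i \shF_i\bigr).
$$
Applying \lemmaref{lem:CJ-V0} to each single summand identifies these loci with $\alpha_j^{-1} \tensor \pu_j \Pic^0(A_j)$ and $\alpha_i^{-1} \tensor \pu_i \Pic^0(A_i)$, respectively, so the containment is, by definition \eqref{eq:PO}, precisely the relation $j \leq i$. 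In particular, both loci are closed translates of abelian subvarieties of $\Pic^0(A)$.

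The key input is the continuous global generation of M-regular sheaves, due to Pareschi and Popa. Since $\shF_j$ is M-regular, for every nonempty Zariski-open $V \subseteq \Pic^0(A_j)$ the evaluation map
$$
\bigoplus_{\gamma \in V} H^0(A_j, \shF_j \tensor \gamma) \tensor \gamma^{-1} \longrightarrow \shF_j
$$
is surjective. I would transfer this via pullback by $p_j$ and twist by $\alpha_j$: the projection formula (and flatness of $p_j$) identifies $H^0(A, \alpha_j \tensor \pu_j \shF_j \tensor \beta)$ with $H^0(A_j, \shF_j \tensor \gamma)$ whenever $\alpha_j \tensor \beta = \pu_j \gamma$, and this yields an analogous surjection
$$
\bigoplus_{\beta \in U} H^0\bigl(A, \alpha_j \tensor \pu_j \shF_j \tensor \beta\bigr) \tensor \beta^{-1} \longrightarrow \alpha_j \tensor \pu_j \shF_j
$$
valid for every nonempty Zariski-open $U$ inside $V^0(A, \alpha_j \tensor \pu_j \shF_j)$.

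I then argue by contradiction. Suppose $\varphi \neq 0$ but the two loci above do not satisfy the containment. By closedness of $V^0(A, \alpha_i \tensor \pu_i \shF_i)$, the set $U \defeq V^0(A, \alpha_j \tensor \pu_j \shF_j) \setminus V^0(A, \alpha_i \tensor \pu_i \shF_i)$ is a nonempty Zariski-open subset of $V^0(A, \alpha_j \tensor \pu_j \shF_j)$. For each $\beta \in U$ the target $H^0(A, \alpha_i \tensor \pu_i \shF_i \tensor \beta)$ vanishes, so the composition of $\varphi$ with the evaluation map $H^0(A, \alpha_j \tensor \pu_j \shF_j \tensor \beta) \tensor \beta^{-1} \to \alpha_j \tensor \pu_j \shF_j$ is forced to be zero. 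Summing over $\beta \in U$ and invoking the surjection above yields $\varphi = 0$, a contradiction. The only step that needs some care is the transfer of continuous global generation from $\shF_j$ on $A_j$ to $\alpha_j \tensor \pu_j \shF_j$ on $A$; once the bookkeeping for twists under the projection formula is in place, the remainder is a formal support argument.
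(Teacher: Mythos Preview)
Your argument is correct. The paper itself does not give a proof but simply cites \cite[Corollary~2.7]{CJ}; what you have written is essentially the argument behind that corollary, namely the observation that a nonzero morphism out of a continuously globally generated sheaf forces an inclusion of $V^0$-loci. The bookkeeping you flag---transferring continuous global generation from $\shF_j$ on $A_j$ to $\alpha_j \tensor \pu_j \shF_j$ on $A$ via the injective pullback $\pu_j \colon \Pic^0(A_j) \hookrightarrow \Pic^0(A)$ and the exactness of $\pu_j$---works exactly as you describe, and the rest is the standard support argument.
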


\begin{proof}
This is the content of \cite[Corollary~2.7]{ChenJiang:positivity}.
\end{proof}

In the remainder of this section, we prove that the existence of a Chen-Jiang
decomposition is preserved under passing to direct summands. 

\begin{proposition} \label{prop:CJ-summand}
Let $\shF$ be a coherent sheaf on an abelian variety $A$. If $\shF$ has the
Chen-Jiang decomposition property, and if 
\[
	\shF \cong \shF' \oplus \shF'',
\]
then both $\shF'$ and $\shF''$ also have the Chen-Jiang decomposition property.
\end{proposition}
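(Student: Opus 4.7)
The plan is to induct on the number of summands $n$ in a reduced Chen-Jiang decomposition of $\shF$ (as in the discussion just preceding \eqref{eq:PO}), peeling off one maximal piece at a time using \lemmaref{lem:CJ}.

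For the base case $n = 1$, write $\shF = \alpha \otimes p^{\ast}\shG$ with $\shG$ M-regular on the quotient abelian variety $A'$. Since $p \colon A \to A'$ has connected fibers one has $p_{\ast}\shO_A = \shO_{A'}$, and the projection formula then gives $\End_A(\shF) = \End_{A'}(\shG)$. Any idempotent defining the direct summand $\shF'$ therefore descends to an idempotent on $\shG$, producing a splitting $\shG = \shG' \oplus \shG''$; since M-regularity passes to direct summands, $\shG'$ and $\shG''$ are M-regular (or zero), and $\shF' = \alpha \otimes p^{\ast}\shG'$ is the desired Chen-Jiang decomposition.

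For the inductive step, fix a reduced Chen-Jiang decomposition $\shF = \bigoplus_{i \in I}(\alpha_i \otimes p_i^{\ast}\shF_i)$ with $|I| = n$, and let $e \in \End(\shF)$ be the idempotent with $\shF' = \im(e)$. Writing $e = (e_{ij})$ as a matrix with $e_{ij}$ the component from the $j$-th summand to the $i$-th summand, \lemmaref{lem:CJ} forces $e_{ij} = 0$ whenever $j \not\leq i$ in the partial order \eqref{eq:PO}. Pick any maximal $i_0 \in I$; by maximality the summand $\shG = \alpha_{i_0} \otimes p_{i_0}^{\ast}\shF_{i_0}$ is $e$-stable, so $e|_{\shG}$ is an idempotent and the base case splits $\shG$ as a sum of two sheaves with the Chen-Jiang decomposition property. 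Setting $\shH = \bigoplus_{i \neq i_0}(\alpha_i \otimes p_i^{\ast}\shF_i)$ so that $\shF = \shG \oplus \shH$, the induced idempotent $\bar{e}$ on $\shF/\shG \cong \shH$ falls under the inductive hypothesis (as $\shH$ has $n-1$ summands in its reduced Chen-Jiang decomposition), giving Chen-Jiang decompositions of $\im(\bar{e})$ and $\ker(\bar{e})$. To assemble these into a Chen-Jiang decomposition of $\shF'$, let $\iota_{\shH} \colon \shH \to \shF$ and $\pi \colon \shF \to \shH$ be the inclusion and projection for $\shF = \shG \oplus \shH$; by construction $\pi \circ e \circ \iota_{\shH} = \bar{e}$, so restricting $e \circ \iota_{\shH}$ to $\im(\bar{e})$ gives a section of the surjection $\pi \colon \shF' \to \im(\bar{e})$, whose kernel is $\shF' \cap \shG = \im(e|_{\shG})$. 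Hence the short exact sequence
\[
	0 \to \im(e|_{\shG}) \to \shF' \to \im(\bar{e}) \to 0
\]
splits, exhibiting $\shF'$ as a direct sum of two sheaves each having the Chen-Jiang decomposition property; the same argument applied to $1 - e$ handles $\shF''$. The main subtlety is that the induction requires a chosen maximal summand to be preserved by every endomorphism of $\shF$, which is exactly what \lemmaref{lem:CJ} provides.
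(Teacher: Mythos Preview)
Your argument is correct. The key point---that maximality of $i_0$ forces $e_{i,i_0} = 0$ for $i \neq i_0$, so the summand $\shG$ is $e$-stable---is exactly what \lemmaref{lem:CJ} gives, and your splitting of the short exact sequence via $e \circ \iota_{\shH}$ is clean.

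The paper takes a different, non-inductive route. Writing the projector $P$ as a matrix $(P_{i,j})$ which is ``triangular'' in the partial order by \lemmaref{lem:CJ}, the authors observe that each diagonal block $P_{i,i} \in \End(\shF_i)$ is itself an idempotent, yielding $\shF_i = \shF_i' \oplus \shF_i''$. They then write down an explicit automorphism $E = \id - D - P$ of $\shF$ (where $D$ is the block-diagonal projector $\bigoplus P_{i,i}$) and check directly that $E$ conjugates $P$ to $D$, so $\shF' \cong \bigoplus_i \alpha_i \otimes p_i^{\ast} \shF_i'$ in one stroke. Their approach has the advantage of identifying the Chen-Jiang decomposition of $\shF'$ globally and explicitly in terms of the diagonal blocks of $P$; yours is more elementary and avoids the somewhat ad hoc verification that $E$ is an automorphism (which the paper handles by a separate lemma), at the cost of building the decomposition one piece at a time.
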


Before we begin the proof, a small remark about projectors. Suppose that $U$ is an
object in an abelian category, and $P \in \End(U)$ is an endomorphism with $P \circ P
= P$. Then we get a direct sum decomposition
\[
	U = \ker(\id - P) \oplus \ker P.
\]
The projection to the first summand is given by $P$, the projection to the second
summand by $(\id - P)$.

\begin{proof}
Let us consider the endomorphism $P \in \End(\shF)$ given by projecting to the
summand $\shF'$. It satisfies $P \circ P = P$, and it is easy to see that
\[
	\shF' = \ker(\id - P) \quad \text{and} \quad
	\shF'' = \ker P.
\]
With respect to a reduced Chen-Jiang decomposition of $\shF$, the endomorphism $P$ is
represented by a collection of morphisms
\[
	P_{i,j} \in \Hom \bigl( \alpha_j \tensor \pu_j \shF_j, 
		\alpha_i \tensor \pu_i \shF_i \bigr),
\]
and Lemma \ref{lem:CJ} shows that $P_{i,j} = 0$ unless $j \leq i$. From $P \circ
P = P$, we obtain
\[
	P_{i,j} = \sum_{k \in I} P_{i,k} \circ P_{k,j}
		= \sum_{j \leq k \leq i} P_{i,k} \circ P_{k,j},
\]
due to the fact that $(I, \leq)$ is partially ordered. Specializing to $i = j$, we
find that
\[
	P_{i,i} = \sum_{k \in I} P_{i,k} \circ P_{k,i} = P_{i,i} \circ P_{i,i}
\]
is itself a projector. Because the morphism $p_i \colon A \to A_i$ has connected
fibers, the projection formula gives
\[
	P_{i,i} \in \Hom \bigl( \alpha_i \tensor \pu_i \shF_i, 
		\alpha_i \tensor \pu_i \shF_i \bigr) =
	\Hom(\shF_i, \shF_i),
\]
and so $P_{i,i}$ determines a direct sum decomposition $\shF_i = \shF_i' \oplus
\shF_i''$, where again 
\[
	\shF_i' = \ker(\id - P_{i,i}) \quad \text{and} \quad \shF_i'' = \ker P_{i,i}.
\]
Clearly $\shF_i'$ and $\shF_i''$ are both M-regular. In terms of $\shF$, this gives
us a more refined direct sum decomposition
\begin{equation} \label{eq:decompositions}
	\shF \cong \bigoplus_{i \in I} \bigl( \alpha_i \tensor \pu_i \shF_i' \bigr)
		\oplus \bigoplus_{i \in I} \bigl( \alpha_i \tensor \pu_i \shF_i'' \bigr).
\end{equation}

To finish the proof, we are going to construct an automorphism of $\shF$ that 
takes $\shF'$ and $\shF''$ to the two summands in \eqref{eq:decompositions}.
To that end, let us define a ``diagonal'' endomorphism $D \in \End(\shF)$ by setting
\[
	D_{i,j} = \begin{cases}
		P_{i,i} &\text{if $i = j$,} \\
		0 &\text{if $i \neq j$.}
	\end{cases}
\]
Clearly, $D \circ D = D$, and the two summands in \eqref{eq:decompositions} are
nothing but $\ker(\id - D)$ and $\ker D$. Now consider the endomorphism $E = \id - D
- P \in \End(\shF)$. Lemma \ref{lem:automorphism} below shows that $E$ is an
automorphism of $\shF$. It is easy to see that
\[
	 E \circ P = - D \circ P = D \circ E;
\]
and $E$ therefore maps the subsheaf $\shF' = \ker(\id - P)$ isomorphically to the
subsheaf $\ker(\id - D)$, and the subsheaf $\shF'' = \ker P$ isomorphically to the
subsheaf $\ker D$. Consequently,
\[
	\shF' \cong \bigoplus_{i \in I} \bigl( \alpha_i \tensor \pu_i \shF_i' \bigr),
\]
which is what we needed to prove.
\end{proof}

\begin{lemma} \label{lem:automorphism}
$E$ is an automorphism of $\shF$.
\end{lemma}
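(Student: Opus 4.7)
The plan is to exploit the partial order $\leq$ on the finite index set $I$ coming from the reduced Chen-Jiang decomposition, and to write $E$ as the product of a diagonal involution and a unipotent endomorphism, both of which are manifestly invertible.

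First I would verify that the ``diagonal'' endomorphism $D$ is itself a projector. The off-diagonal entries of $D \circ D$ vanish by construction, while on the diagonal
\[
	(D \circ D)_{i,i} \;=\; P_{i,i} \circ P_{i,i} \;=\; P_{i,i} \;=\; D_{i,i},
\]
using the identity $P_{i,i} = P_{i,i} \circ P_{i,i}$ established just above in the proof of \propositionref{prop:CJ-summand}. Consequently $E_{i,i} = \id - 2 P_{i,i}$ squares to $\id$, so $\id - 2 D$ is an involution of $\shF$, and in particular an automorphism.

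Next I would write $E = (\id - 2D) - N$, where $N \defeq P - D$ satisfies $N_{i,i} = 0$ and $N_{i,j} = P_{i,j}$ for $i \neq j$. By \lemmaref{lem:CJ}, $P_{i,j} = 0$ unless $j \leq i$, so $N_{i,j} = 0$ unless $j$ is strictly less than $i$ in $\leq$. After refining $\leq$ to any total order on the finite set $I$, the endomorphism $N$ becomes strictly lower triangular, hence nilpotent, and the same is true of $N' \defeq (\id - 2D) \circ N$ since left multiplication by a diagonal endomorphism preserves strict lower triangularity. The identity
\[
	(\id - 2 D) \bigl( \id - N' \bigr) \;=\; (\id - 2 D) - N \;=\; E,
\]
together with the finite Neumann series $(\id - N')^{-1} = \sum_{k \geq 0} (N')^k$, then exhibits $E$ as an automorphism of $\shF$.

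I do not expect any real obstacle: the whole argument is a bookkeeping exercise driven by the partial order defined in \eqref{eq:PO}. The only point worth highlighting is that $D$ itself is a projector, which is what converts the triangular structure supplied by \lemmaref{lem:CJ} into the clean decomposition of $E$ into a diagonal involution and a unipotent endomorphism.
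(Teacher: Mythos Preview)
Your argument is correct. Both your proof and the paper's exploit the same triangular structure coming from the partial order in \eqref{eq:PO}, but the packaging differs. The paper argues directly that the linear system $y = Ex$ can be solved uniquely by induction on the partial order: for minimal $i$ one has $x_i = (\id - 2P_{i,i})y_i$, and for general $i$ one solves for $x_i$ in terms of the already-determined $x_j$ with $j < i$. You instead factor $E = (\id - 2D)(\id - N')$ as a diagonal involution times a unipotent endomorphism and invoke the finite Neumann series. Your factorization is slightly more conceptual and makes the invertibility transparent without an explicit recursion; the paper's version has the minor advantage of writing down the inverse concretely. Either way, the content is the same: strict lower-triangularity of $P - D$ with respect to $\leq$, together with $(\id - 2P_{i,i})^2 = \id$ on the diagonal.
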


\begin{proof}
We have to show that the system of equations $y = E x$ has a unique solution for
every collection of local sections $y_i \in \alpha_i \tensor \pu_i \shF_i$.
Concretely, the equations are
\[
	y_i = x_i - 2 P_{i,i} x_i - \sum_{j < i} P_{i,j} x_j \qquad \text{($i \in I$).}
\]
If $i \in I$ is minimal, the condition is $y_i = x_i - 2 P_{i,i} x_i$, which has the
unique solution $x_i = y_i - 2 P_{i,i} y_i$. In general, we have
\[
	x_i = (\id - 2 P_{i,i}) y_i + (\id - 2 P_{i,i}) \sum_{j < i} P_{i,j} x_j,
\]
and since $I$ is finite, we can argue by induction.
\end{proof}

\subsection{Generic base change}

In this section, we use Grothendieck's generic flatness theorem to prove a sort of
``generic base change theorem''. Suppose that $f \colon X \to Y$ and $p \colon Y \to Z$ are
proper morphisms between schemes of finite type over a field, with $Z$ generically
reduced and $g = p \circ f$ surjective. 
\[
\begin{tikzcd}
X_z \rar[hook] \dar{f_z} & X \dar{f} \arrow[bend left=60]{dd}{g} \\
Y_z \rar[hook] \dar & Y \dar{p} \\
\{z\} \rar[hook] & Z
\end{tikzcd}
\]
Let $\shF$ be a coherent sheaf on $X$. For any point $z \in Z$, we denote by $f_z
\colon X_z \to Y_z$ the restriction of the morphism $f \colon X \to Y$ to the fibers
over $z$, and by $\shF_z$ the restriction of $\shF$ to $X_z$.

\begin{proposition} \label{prop:base-change}
Notation and assumptions being as above, there is a nonempty Zariski-open subset $U
\subseteq Z$ such that the base change morphism
\begin{equation} \label{eq:base-change}
	R^i \fl \shF \restr{Y_z} \cong R^i (f_z)_{\ast} \shF_z
\end{equation}
is an isomorphism for every $z \in U$ and every $i \in \NN$.
\end{proposition}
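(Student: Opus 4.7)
The plan is to combine Grothendieck's generic flatness theorem with the derived projection formula. Since $Z$ is generically reduced, we first shrink $Z$ to a dense open affine subscheme that is integral. Since $f$ is proper and all schemes are Noetherian, only finitely many of the higher direct images $R^i \fl \shF$ are nonzero. Applying Grothendieck's generic flatness theorem a finite number of times, we further shrink $Z$ to an open $U$ over which each of the coherent sheaves $\shO_X$, $\shO_Y$, $\shF$, and every nonzero $R^i \fl \shF$ is flat over $\shO_Z$ (via $g$ or $p$, as appropriate).

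Fix a point $z \in U$. Because $\shO_X$ and $\shO_Y$ are $Z$-flat on $U$, the derived pullbacks of the residue field $\kappa(z)$ agree with the ordinary ones: $\gu \kappa(z) = \shO_{X_z}$ and $\pu \kappa(z) = \shO_{Y_z}$. The derived projection formula for the proper (hence quasi-compact and quasi-separated) morphism $f$, applied to the quasi-coherent sheaves $\shF$ and $\pu \kappa(z)$, then yields
$$\derR \fl \shF \;\Ltensor_{\shO_Y}\; \pu \kappa(z) \;\cong\; \derR \fl \bigl(\shF \Ltensor_{\shO_X} \gu \kappa(z)\bigr).$$
Since $\shF$ is $Z$-flat, the right-hand side collapses to $\derR \fl \shF_z = \derR (f_z)_{\ast} \shF_z$. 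Since each $R^i \fl \shF$ and $\shO_Y$ are $Z$-flat, a spectral-sequence argument shows that the left-hand side has no higher Tors, and reduces to $\bigoplus_i (R^i \fl \shF)\restr{Y_z} \decal{-i}$. Taking cohomology in degree $i$ then produces the base change isomorphism \eqref{eq:base-change}.

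The main technical point is verifying that all the derived tensor products collapse to ordinary ones under the flatness hypotheses; this is a routine check (via the Tor spectral sequence) once the generic flatness is established. The real content of the proposition lies in assembling the correct finite list of coherent sheaves — namely $\shO_X$, $\shO_Y$, $\shF$, and the finitely many nonzero $R^i \fl \shF$ — to which generic flatness must be applied simultaneously.
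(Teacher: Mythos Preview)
Your proof is correct and follows the same strategy as the paper's: shrink $Z$ via generic flatness until $\shF$, $\shO_Y$, and all $R^i \fl \shF$ are flat over $Z$, then apply the projection formula and use the flatness to collapse the derived tensor products. The only difference is presentational: the paper first shrinks $Z$ to be nonsingular and uses an explicit Koszul resolution of $\shO_z$ (pulled back along $p$ and $g$) in place of your derived projection formula and Tor spectral sequence, but this is the same computation in different language.
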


\begin{proof}
Since $Z$ is generically reduced, we can restrict everything to a nonempty
Zariski-open subset in $Z$ and reduce the problem to the case where $Z$ is
nonsingular. By the theorem on generic flatness \cite[Th\'eor\`eme~6.9.1]{EGA},
$\shF$ is flat over a nonempty Zariski-open subset of $Z$. Replacing $Z$ by this
subset, we reduce the problem to
the case where $\shF$ is flat over $Z$. The higher direct image sheaves $R^i \fl \shF$
are coherent, and vanish for $i \gg 0$. Appealing to the theorem of generic flatness
once more, we see that $Y$ and all the coherent sheaves $R^i \fl \shF$ are flat over
a common nonempty Zariski-open subset of $Z$; restricting to this subset, we can
assume that this holds over $Z$. After all these reductions have been made, we claim
that \eqref{eq:base-change} is now true for all points of $Z$.

Let $\shO_z$ denote the structure sheaf of a point $z \in Z$. Because $Z$ is
nonsingular, we can (on some affine neighborhood) resolve $\shO_z$ by a Koszul
complex $\Kb$ consisting of locally free sheaves; the augmented complex 
\[
	\dotsb \to K^{-1} \to K^0 \to \shO_z \to 0
\]
is exact. Now $\shF$ is flat over $Z$, and therefore the complex
\[
	\dotsb \to \shF \tensor_{\OX} \gu K^{-1} 
		\to \shF \tensor_{\OX} \gu K^0
		\to \shF \tensor_{\OX} \gu \shO_z \to 0
\]
stays exact; in other words, $\shF \tensor_{\OX} \gu \Kb$ is a resolution of $\shF_z$,
viewed as a coherent sheaf on $X$. This gives
\[
	\derR \fl \shF \tensor_{\OY} \pu \Kb 
		\cong \derR \fl \Bigl( \shF \tensor_{\OX} \gu \Kb \Bigr)
		\cong \derR (f_z)_{\ast} \shF_z 
\]
by the projection formula. Each $R^i \fl \shF$ is also flat over $Z$, and so for the
same reason as above, the complex $R^i \fl \shF \tensor_{\OY} \pu \Kb$ has cohomology
only in degree zero, and is therefore a resolution of $R^i \fl \shF \restr{Y_z}$,
viewed as a coherent sheaf on $Y$. We now obtain \eqref{eq:base-change} by
taking cohomology in degree $i$.
\end{proof}

\section{Proof of the main theorem}

\subsection{Pushforwards of canonical bundles}\label{scn:m=1}

In this section, we briefly indicate how the proof of Theorem \ref{thm:main} in the
case $m = 1$ follows quickly from \cite[Theorem~A]{PPS} (and of course from  
\cite[Theorem~1.1]{ChenJiang:positivity} when $f$ is generically finite). 

\begin{theorem} \label{thm:main-1}
Let $f \colon X \to A$ be a morphism from a smooth projective variety to an abelian
variety. If a coherent sheaf $\shF$ on $A$ is isomorphic to a direct summand of $\fl
\omX$, then there is an isogeny $\varphi \colon A' \to A$ such that
$\varphiu \shF$ is globally generated.  
\end{theorem}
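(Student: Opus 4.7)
The plan is to combine the Chen-Jiang decomposition property (which $\shF$ inherits from $\fl\omega_X$) with the fact that M-regular sheaves become globally generated after pullback by a sufficiently divisible multiplication isogeny.

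First, by \cite[Theorem~A]{PPS} the sheaf $\fl\omega_X$ has the Chen-Jiang decomposition property, and since $\shF$ is a direct summand of $\fl\omega_X$, Proposition~\ref{prop:CJ-summand} transfers this to $\shF$, yielding
$$\shF \cong \bigoplus_{i\in I}\bigl(\alpha_i \otimes p_i^{\ast}\shF_i\bigr),$$
with each $p_i\colon A \to A_i$ a surjective homomorphism of abelian varieties with connected fibers, each $\shF_i$ M-regular on $A_i$, and each $\alpha_i\in\Pic^0(A)$ of finite order. I would then take $\varphi = N_A\colon A \to A$ to be multiplication by a sufficiently divisible positive integer $N$. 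The compatibility $p_i\circ N_A = N_{A_i}\circ p_i$ yields
$$N_A^{\ast}\shF \cong \bigoplus_{i\in I}\bigl(N_A^{\ast}\alpha_i \otimes p_i^{\ast}(N_{A_i}^{\ast}\shF_i)\bigr),$$
and once $N$ is divisible by the orders of all $\alpha_i$, the twists $N_A^{\ast}\alpha_i$ become trivial. Since global generation is preserved under pullback and direct sum, it suffices to prove that $N_{A_i}^{\ast}\shF_i$ is globally generated on $A_i$ for $N$ sufficiently divisible.

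The heart of the proof is thus a local assertion: if $\shG$ is M-regular on an abelian variety $B$, then $N_B^{\ast}\shG$ is globally generated for $N$ sufficiently divisible. By Pareschi--Popa \cite{PP5}, M-regular sheaves are continuously globally generated, so for each $z\in B$ the set
$$U_z = \menge{\beta\in\Pic^0(B)}{H^0(B,\shG\otimes\beta)\otimes\shO_B\to\shG \text{ is surjective at } z}$$
is a nonempty Zariski-open subset of $\Pic^0(B)$. Density of torsion points together with the openness of $\{z\in B : \beta\in U_z\}$ for fixed torsion $\beta$ and quasi-compactness of $B$ yield finitely many torsion line bundles $\beta_1,\ldots,\beta_k$ such that for every $z\in B$ some $\beta_j$ belongs to $U_z$. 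Taking $M$ to be a common multiple of the orders of the $\beta_j$, the standard decomposition $(M_B)_{\ast}\shO_B \cong \bigoplus_{\beta\in\Pic^0(B)[M]}\beta^{-1}$ gives
$$H^0(B, M_B^{\ast}\shG) \cong \bigoplus_{\beta\in\Pic^0(B)[M]} H^0(B, \shG\otimes\beta^{-1}),$$
and since each $\beta_j$ lies in $\Pic^0(B)[M]$, evaluation at any point $y\in B$ surjects onto the stalk of $M_B^{\ast}\shG$ at $y$, using sections from $H^0(B,\shG\otimes\beta_j^{-1})$ for an appropriate $j$ determined by $z=M_B(y)$. Hence $M_B^{\ast}\shG$ is globally generated. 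Choosing $N$ to be a common multiple of the orders of all $\alpha_i$ and of the finitely many integers $M_i$ produced from each $\shF_i$ in this way completes the argument.

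The main obstacle is this local statement: upgrading continuous global generation, which only provides surjectivity using sections from open families of $\Pic^0$-twists, into genuine global generation via a uniform finite collection of torsion twists. Once it is in place, the remaining steps reduce to formal manipulation of the Chen-Jiang decomposition of $\shF$ and the compatibility of multiplication isogenies with homomorphisms of abelian varieties.
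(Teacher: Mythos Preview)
Your overall strategy coincides with the paper's: use the Chen-Jiang decomposition of $\fl\omega_X$ (from \cite{PPS}) together with \propositionref{prop:CJ-summand} to decompose $\shF$, then observe that each M-regular piece is continuously globally generated \cite[Proposition~2.13]{PP4} and hence becomes globally generated after an isogeny. The paper simply invokes \cite[Proposition~3.1]{Debarre} for this last implication, whereas you try to prove it inline via the sets $U_z$.

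That inline argument has a genuine gap. Your set
\[
	U_z = \menge{\beta \in \Pic^0(B)}{H^0(B, \shG \tensor \beta) \to (\shG \tensor \beta) \tensor \CC(z) \text{ is surjective}}
\]
need not be nonempty: continuous global generation only guarantees that the images of $H^0(\shG\otimes\beta)$ in the fibre $\shG\otimes\CC(z)$, as $\beta$ ranges over any open set, \emph{jointly} span---not that a single $\beta$ suffices. A concrete counterexample is the Atiyah bundle $\shG=F_2$ of rank $2$ and degree $1$ on an elliptic curve $E$. It is stable with $h^1(F_2\otimes\beta)=0$ for every $\beta\in\Pic^0(E)$, hence satisfies $\mathrm{IT}_0$ and in particular is M-regular; but $\chi(F_2)=1$, so $h^0(F_2\otimes\beta)=1$ for all $\beta$, and a one-dimensional space of sections can never surject onto the two-dimensional fibre. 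Thus $U_z=\emptyset$ for every $z\in E$, and your final step---which at each point $y$ uses sections from a \emph{single} summand $H^0(\shG\otimes\beta_j^{-1})$---does not establish global generation of $M_B^{\ast}\shG$.

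The repair is to work with finite tuples rather than single twists: by continuous global generation and coherence there exist $\beta_1,\dots,\beta_k\in\Pic^0(B)$ with $\bigoplus_j H^0(\shG\otimes\beta_j)\otimes\beta_j^{-1}\to\shG$ surjective, and one then argues (this is Debarre's point) that the $\beta_j$ can be perturbed to torsion points while preserving surjectivity. Once that is in place, the pullback decomposition $(M_B)_{\ast}\shO_B\cong\bigoplus_{\beta\in\Pic^0(B)[M]}\beta^{-1}$ does the job, using \emph{all} the summands $H^0(\shG\otimes\beta_j)$ simultaneously at each point. With this correction your argument is complete and identical in spirit to the paper's.
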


\begin{proof}
According to the results in \S\ref{scn:CJ}, there is a decomposition
\[
	\shF \cong \bigoplus_{i \in I} \bigl( \alpha_i \tensor \pu_i \shF_i \bigr),
\]
where each $p_i \colon A \to A_i$ is a surjective morphism with connected fibers
to an abelian variety $A_i$, each $\shF_i$ is an M-regular coherent sheaf on $A_i$,
and each $\alpha_i \in \Pic^0(A)$ has finite order. 

Now every M-regular coherent sheaf is continuously globally generated
\cite[Proposition~2.13]{PP4}, and according to a result by Debarre
\cite[Proposition~3.1]{Debarre}, continuously globally generated sheaves on
abelian varieties become globally generated after an isogeny. Since the line bundles
$\alpha_i$ have finite order, we can therefore find an isogeny $\varphi \colon A' \to
A$ that makes the pullback of every $\alpha_i$ trivial, and the pullback of each
$\pu_i \shF_i$ globally generated. But then the pullback of $\shF$ is globally
generated as well.
\end{proof}

When dealing with the case $m\ge 2$, we use precisely the opposite approach: we first
show directly the analogue of Theorem \ref{thm:main-1}, and then use it in order to
deduce the decomposition in Theorem \ref{thm:CJ}.

\subsection{Strategy of the proof}

In this section, we outline our strategy for proving Theorem \ref{thm:main}. Let $f
\colon X \to A$ be a morphism from a smooth projective variety $X$ to an abelian
variety $A$, and for any $m \geq 1$, denote by $\fl \omXm$ the pushforward of the
$m$-th pluricanonical bundle to a coherent sheaf on the abelian variety.

Note first that it is enough to prove the statement for each $m \ge 1$ individually.
Indeed, the fact that there exists a single isogeny that works for all $m \geq 1$
then follows from the fact that the sheaf of $\shO_A$-algebras 
\[	
	\bigoplus_{m \ge 0} f_* \omega_X^{\otimes m} 
\]
is finitely generated \cite[Theorem~1.2]{BCHM}. Fix an integer $m \geq 1$, and define
\[
	\shFm \defeq \fl \omXm. 
\]
If $\shF$ is zero, then the assertion is trivially true; we will therefore assume
from now on that $\shF \neq 0$.

A useful observation is that we are allowed to base change by arbitrary isogenies for
the purpose of proving Theorem \ref{thm:main} and Theorem \ref{thm:main-summand}.

\begin{lemma} \label{lem:isogeny}
Let $\varphi \colon A' \to A$ be an isogeny, and let $f' \colon X' \to A'$ denote the
base change of the morphism $f \colon X \to A$, as in the diagram below.
\[
\begin{tikzcd}
X' \dar{f'} \rar & X \dar{f} \\
A' \rar{\varphi} & A
\end{tikzcd}
\]
Then the conclusion of Theorem \ref{thm:main} (resp.\ Theorem \ref{thm:main-summand}) holds
for $f' \colon X' \to A'$ if and only if it holds for $f \colon X \to A$.
\end{lemma}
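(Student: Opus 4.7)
The plan is to reduce everything to flat base change together with the observation that an isogeny $\varphi$ is \'etale, so that $g \colon X' \to X$ is \'etale and $\omega_{X'}^{\tensor m} \cong \gu \omega_X^{\tensor m}$. Combined with flat base change, this gives the crucial identification $\fl' \omega_{X'}^{\tensor m} \cong \varphiu \fl \omega_X^{\tensor m}$, which underlies every part of the proof. I also record that $\varphi_{\ast} \shO_{A'}$ decomposes as a direct sum of torsion line bundles on $A$, with $\shO_A$ as a direct summand, so by the projection formula $\fl \omega_X^{\tensor m}$ is a direct summand in $\varphi_{\ast} \varphiu \fl \omega_X^{\tensor m} = \varphi_{\ast} \fl' \omega_{X'}^{\tensor m}$.

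For \theoremref{thm:main}, the ``if $f$ then $f'$'' direction is immediate: given an isogeny $\rho \colon B \to A$ with $\rhou \fl \omega_X^{\tensor m}$ globally generated, I form $B' = B \times_A A'$, which is an isogeny over $A'$, and observe that pulling back via $B' \to A'$ equals pulling back via $B' \to B \to A$, so the resulting sheaf remains globally generated. Conversely, given an isogeny $\psi \colon A'' \to A'$ that works for $f'$, the composition $\varphi \circ \psi \colon A'' \to A$ is an isogeny and $(\varphi \circ \psi)^{\ast} \fl \omega_X^{\tensor m} = \psi^{\ast} \fl' \omega_{X'}^{\tensor m}$ is globally generated.

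For \theoremref{thm:main-summand}, one direction proceeds by pulling the cover back: if $g_m \colon X_m \to X$ is generically finite with $X_m$ smooth and $\fl \omega_X^{\tensor m}$ a direct summand in $(f \circ g_m)_{\ast} \omega_{X_m}$, I form the fiber product $X'_m = X_m \times_X X'$. Then $X'_m \to X_m$ is \'etale, so $X'_m$ is smooth and $\omega_{X'_m} \cong$ pullback of $\omega_{X_m}$, and flat base change applied to the direct summand relation yields that $\fl' \omega_{X'}^{\tensor m}$ is a direct summand in $(f' \circ g'_m)_{\ast} \omega_{X'_m}$. In the other direction, given a generically finite $g'_m \colon X'_m \to X'$ with the desired property for $f'$, let $g_m$ be the composition $X'_m \to X' \to X$, still generically finite, with $X'_m$ smooth; pushing forward along $\varphi$ makes $\varphi_{\ast} \fl' \omega_{X'}^{\tensor m}$ a direct summand in $(f \circ g_m)_{\ast} \omega_{X'_m}$, and the projection-formula observation above then extracts $\fl \omega_X^{\tensor m}$ itself as a direct summand, as required.

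The only step with any content beyond formal diagram-chasing is this last one: splitting off $\fl \omega_X^{\tensor m}$ from $\varphi_{\ast} \varphiu \fl \omega_X^{\tensor m}$ via the decomposition of $\varphi_{\ast} \shO_{A'}$ for an isogeny. Everything else is flat base change, transitivity of fiber products of isogenies, and the fact that \'etale pullback preserves smoothness, generic finiteness, and the canonical bundle.
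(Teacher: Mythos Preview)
Your proof is correct and follows the same approach as the paper: the key identification $\fl' \omega_{X'}^{\tensor m} \cong \varphiu \fl \omega_X^{\tensor m}$ via \'etale base change, and the splitting of $\fl \omega_X^{\tensor m}$ from $\varphi_{\ast} \varphiu \fl \omega_X^{\tensor m}$, are exactly what the paper uses. The paper's argument is terser, treating only the non-obvious direction of \theoremref{thm:main-summand} explicitly, whereas you spell out all four implications; but the substance is identical.
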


\begin{proof}
In the case of Theorem \ref{thm:main}, the point is that the coherent sheaf $\shFm =
\fl \omXm$ pulls back to $\shFm' = \fl' \omega_{X'}^{\tensor m}$, because the morphism
from $X'$ to $X$ is finite \'etale. In the case of Theorem \ref{thm:main-summand}, the point
is that $\shFm$ occurs as a direct summand in 
\[
	\varphil \shFm' \cong \varphil \varphiu \shFm;
\]
thus if $\shFm'$ is isomorphic to a direct summand of $(f' \circ g')_{\ast}
\omega_{Y'}$ for some generically finite morphism $g' \colon Y' \to X'$, then
the induced morphism $g \colon Y' \to X$ is still generically finite, and $\shFm$ is
isomorphic to a direct summand of $(f \circ g)_{\ast} \omega_{Y'}$, 
\end{proof}

We know from \cite[Theorem~1.10]{pluricanonical} that $\shFm$ is a
$\mathrm{GV}$-sheaf on $A$. We also know that
all irreducible components of the locus
\[
	V_{\ell}^0(A, \shFm)
		= \menge{\alpha \in \Pic^0(A)}{\dim H^0(A, \shFm \tensor \alpha) \geq \ell}
\]
are torsion subvarieties, for any $\ell \in \NN$; see \cite[Theorem~3.5]{Lai} for $\ell =1$, and \cite[Theorem~10.1]{HPS}
and (a special case of) \cite[Theorem~1.3]{Shibata2}  in general. The idea behind the proof of
Theorem \ref{thm:main} is to consider the maximal subsheaf of $\shFm$ that becomes
globally generated after base change by an isogeny, and then to show that this
maximal subsheaf equals $\shFm$. More precisely, we define $\shG \subseteq
\shFm$ as the image of the evaluation morphism
\begin{equation} \label{eq:shG-def}
	\bigoplus_{\substack{\alpha \in \Pic^0(A)\\\text{torsion}}} 
		H^0(A, \shFm \tensor \alpha) \tensor \alpha^{-1}  \to \shFm.
\end{equation}
The following lemma is obvious from the definition of
$\shG$; we shall prove later on that the statement is actually true for every
$\alpha \in \Pic^0(A)$.

\begin{lemma}  \label{lem:GF-torsion}
One has 
\[
	H^0(A, \shG \tensor \alpha) = H^0(A, \shFm \tensor \alpha)
\]
for every torsion line bundle $\alpha \in \Pic^0(A)$. 
\end{lemma}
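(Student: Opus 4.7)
The plan is a very short functoriality argument: the content is that, by construction, the $\alpha$-isotypic piece of the evaluation map \eqref{eq:shG-def} already accounts for all of $H^0(A,\shFm \tensor \alpha)$ when $\alpha$ is torsion.

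First I would record the obvious inclusion $H^0(A, \shG \tensor \alpha) \subseteq H^0(A, \shFm \tensor \alpha)$, which follows from $\shG \subseteq \shFm$. For the reverse inclusion, fix a torsion $\alpha \in \Pic^0(A)$ and look at the single summand indexed by $\alpha$ in \eqref{eq:shG-def}; by definition of $\shG$ as the image, the evaluation map factors as
\[
	H^0(A, \shFm \tensor \alpha) \tensor \alpha^{-1}
		\longrightarrow \shG \hookrightarrow \shFm.
\]
Tensor this factorization with $\alpha$ and apply $H^0(A, -)$ to obtain
\[
	H^0(A, \shFm \tensor \alpha) \longrightarrow
		H^0(A, \shG \tensor \alpha) \hookrightarrow H^0(A, \shFm \tensor \alpha).
\]
The composite of these two maps is the identity, because the outer map comes from the standard evaluation $H^0(A, \shFm \tensor \alpha) \tensor \shO_A \to \shFm \tensor \alpha$. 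Hence the inclusion on the right is surjective, and the two spaces of global sections coincide.

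There is no real obstacle here: once one unwinds the definition of $\shG$ as an image and remembers to twist back by $\alpha$, the statement is a tautology. The only mild care required is that this argument uses the $\alpha$-summand of the evaluation map directly, so it works precisely for $\alpha$ torsion (the indexing set of the sum in \eqref{eq:shG-def}); extending the equality to arbitrary $\alpha \in \Pic^0(A)$, as the authors foreshadow, will require the deeper arguments developed later in the paper and is not attempted here.
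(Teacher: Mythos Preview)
Your argument is correct and is exactly the unpacking of what the paper means when it says the lemma ``is obvious from the definition of $\shG$'': the $\alpha$-summand of \eqref{eq:shG-def} forces the evaluation to factor through $\shG$, and twisting back by $\alpha$ shows the inclusion on global sections is an equality. The paper gives no further proof, so your write-up simply makes explicit the tautology the authors leave to the reader.
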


Observe that, because $\shFm$ is coherent, by the noetherian property there must be finitely many torsion
line bundles $\alpha_1, \dotsc, \alpha_N \in \Pic^0(A)$ such that
\[
	\bigoplus_{i=1}^N H^0(A, \shFm \tensor \alpha_i) \tensor \alpha_i^{-1} \to \shG
\]
is surjective. In particular, $\shG$ is again a coherent sheaf of $\shO_A$-modules. After making a base change by a suitable isogeny -- which is
permissible by Lemma \ref{lem:isogeny} -- we may therefore assume without loss of
generality that $\shG$ is actually globally generated. Now Theorem \ref{thm:main} is
claiming that $\shG = \shFm$, and so we consider the quotient sheaf $\shQ =
\shFm/\shG$, and the resulting short exact sequence
\begin{equation} \label{eq:shQ}
	0 \to \shG \to \shFm \to \shQ \to 0.
\end{equation}
Our proof that $\shQ = 0$ proceeds along the following lines:
\begin{enumerate}
\item We use Viehweg's cyclic covering trick to show that $\shG$ occurs as a direct
summand in the pushforward of a canonical bundle (from a smooth projective variety,
generically finite over $X$). This implies, among other things, that $\shQ$
is a $\mathrm{GV}$-sheaf, and so $\shQ = 0$ is equivalent to $V^0(A, \shQ)$ being empty; see \cite[Lemma~7.4]{HPS}.
\item We show that $H^0(A, \shG\otimes \alpha) = H^0(A, \shFm \otimes \alpha)$ for
every $\alpha \in
\Pic^0(A)$.  After base change by an isogeny, this implies that $V^0(A, \shQ)
\subseteq V^1(A, \shG)$ is contained in a finite union of subtori of codimension
$\geq 1$.  
\item We show that $V^0(A, \shQ)$ does not meet any positive-dimensional
subtorus contained in $V^1(A, \shG)$. Here the point is that the ``generic base
change theorem'', together with invariance of plurigenera, reduces the problem to an
instance of Theorem \ref{thm:main} over an abelian variety of lower dimension, which
can be handled by induction.
\item The only remaining case is $V^1(A, \shG) = \{\OA\}$. Here we use some
recent work about singular hermitian metrics on pushforwards of relative
pluricanonical bundles \cite{CP,HPS} to prove that $V^0(A, \shQ)$ must be empty.
\end{enumerate}

\subsection{Viehweg's covering trick}

Recall that we defined $\shG$ as the maximal subsheaf of $\shFm = \fl \omXm$ that
becomes globally generated after an isogeny; by the reduction above we can assume 
that it is itself globally generated. The first step in
the proof is to show that $\shG$ occurs as a direct summand in the pushforward of a
canonical bundle. This is a simple consequence of Viehweg's cyclic covering trick,
using the maximality and  global generation of $\shG$.

\begin{proposition} \label{prop:viehweg}
Keeping the notation from above, there is a smooth projective variety $Y$, and a
generically finite morphism $g \colon Y \to X$, with the property that $\shG$ is
isomorphic to a direct summand in $(f \circ g)_{\ast} \omY$.
\end{proposition}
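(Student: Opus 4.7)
The plan is to apply Viehweg's cyclic covering trick to sections of $\omega_X^{\otimes m}$ coming from $H^0(A,\shG)$, producing a generically finite cover $g \colon Y \to X$ with $Y$ smooth projective, and then to use the $m=1$ case of the Chen--Jiang decomposition applied to $(f\circ g)_{\ast}\omega_Y$ to extract $\shG$ itself as a direct summand.

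Concretely, assuming $\shG \neq 0$, set $V := H^0(A,\shG) \subseteq H^0(X, \omega_X^{\otimes m})$. I take a log resolution $\mu \colon X' \to X$ of the base ideal of $V$, so that $\mu^{\ast}\omega_X^{\otimes m} = M + F$ with $M$ globally generated and $F$ effective; after a further blow-up if necessary, $M$ admits an $m$-th root $L \in \mathrm{Pic}(X')$. By Bertini a general $D \in |M|$ is smooth, and the $m$-fold cyclic cover $\pi\colon Y \to X'$ extracting the $m$-th root of $D$ is smooth projective. Setting $g := \mu \circ \pi$, Viehweg's pushforward formula
\[
\pi_{\ast}\omega_Y \;\cong\; \bigoplus_{k=0}^{m-1} \omega_{X'} \otimes L^{\otimes k},
\]
combined with a standard computation of $\mu_{\ast}$ on the top ($k=m-1$) summand (using the relative canonical and Grauert--Riemenschneider), exhibits a sheaf of the form $\omega_X^{\otimes m}\otimes\shJ$ as a direct summand of $g_{\ast}\omega_Y$, with $\shJ \subseteq \shO_X$ the multiplier ideal of the base locus of $V$. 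Pushing forward by $f$ yields $f_{\ast}(\omega_X^{\otimes m}\otimes\shJ)$ as a direct summand of $(f\circ g)_{\ast}\omega_Y$, and by construction $\shG$ is contained in $f_{\ast}(\omega_X^{\otimes m}\otimes\shJ)$, since the sections in $V$ vanish on the base locus.

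To promote this to $\shG$ being itself a direct summand, I appeal to the $m=1$ case of the Chen--Jiang decomposition \cite[Theorem~A]{PPS}, which gives $(f\circ g)_{\ast}\omega_Y$ the Chen--Jiang decomposition property; by \propositionref{prop:CJ-summand}, this property descends to the direct summand $f_{\ast}(\omega_X^{\otimes m}\otimes\shJ)$. In the resulting decomposition $\bigoplus_i(\alpha_i \otimes p_i^{\ast}\shF_i)$ with $\alpha_i$ torsion and $\shF_i$ M-regular on a quotient $A_i$, each $\shF_i$ is continuously globally generated and hence globally generated after an isogeny of $A_i$ (\cite[Proposition~3.1]{Debarre}); descending along this finite \'etale pullback shows that each summand $\alpha_i\otimes p_i^{\ast}\shF_i$ equals the image of a torsion-twisted evaluation on $A$, and therefore sits inside $\shG$ by definition. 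Summing, $f_{\ast}(\omega_X^{\otimes m}\otimes\shJ) \subseteq \shG$, so in fact $\shG = f_{\ast}(\omega_X^{\otimes m}\otimes\shJ)$ is a direct summand of $(f\circ g)_{\ast}\omega_Y$, as desired.

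The main obstacle is the Viehweg step: the log resolution and the cyclic-cover calculation must be arranged so that the multiplier ideal $\shJ$ that emerges is exactly the base-locus ideal of $V$, so that the pushforward captures precisely $\shG$ once the Chen--Jiang argument is applied. The ingredients -- Viehweg's formula, Grauert--Riemenschneider, basic multiplier-ideal pushforward -- are all classical, but the calibration requires care, especially in the presence of a nontrivial base locus of the chosen linear subsystem $V \subseteq H^0(X,\omega_X^{\otimes m})$.
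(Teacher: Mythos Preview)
Your overall strategy is the same as the paper's, but there is a genuine gap in the construction of the cyclic cover. The claim that ``after a further blow-up if necessary, $M$ admits an $m$-th root $L \in \Pic(X')$'' is simply false: blowing up adds exceptional classes to the Picard group but does not make a given line bundle divisible. Since $M = \mu^{\ast}\omega_X^{\tensor m}(-F)$, divisibility of $M$ by $m$ is equivalent to divisibility of $F$ by $m$, and there is no reason for the fixed part $F$ of your linear system to have this property. Without the root $L$, the cyclic cover along $D$ alone is not defined, the formula $\pi_{\ast}\omega_Y \cong \bigoplus_{k} \omega_{X'}\tensor L^{\tensor k}$ has no meaning, and the subsequent pushforward computation collapses.

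The fix, which is exactly what the paper does, is to take the cyclic cover branched along the \emph{full} divisor $D+F$ (in your notation) rather than along $D$ alone. This divisor lies in $\lvert \mu^{\ast}\omega_X^{\tensor m}\rvert$, so the required $m$-th root is simply $\mu^{\ast}\omega_X$; no divisibility hypothesis is needed. The cover is now singular (since $D+F$ is only SNC), but after resolving one applies the Esnault--Viehweg lemma \cite[Lemma~2.3]{Viehweg} to obtain
\[
	\omega_X^{\tensor m} \tensor \shO_X \Bigl( -\Bigl\lfloor \tfrac{m-1}{m}(D+E) \Bigr\rfloor \Bigr)
	= \omega_X^{\tensor m} \tensor \shO_X \Bigl( -\Bigl\lfloor \tfrac{m-1}{m} E \Bigr\rfloor \Bigr)
\]
as a direct summand of $g_{\ast}\omega_Y$ (here $E$ plays the role of your $F$, and the equality uses that $D$ is irreducible and not contained in $E$). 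Once this is in place, your final step---showing that the resulting summand $\shGt \subseteq \shFm$ coincides with $\shG$ by invoking the Chen--Jiang decomposition and the maximality of $\shG$---is correct and is essentially the paper's argument: the paper phrases it as ``$\shGt$ becomes globally generated after an isogeny by \theoremref{thm:main-1}, hence the torsion-twisted evaluation map surjects onto $\shGt$, hence $\shGt \subseteq \shG$,'' which is the same content packaged slightly more directly.
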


\begin{proof}
Following Viehweg \cite[\S5]{Viehweg}, we consider the adjoint morphism
\[
	\fu \shG \to \omXm.
\]
After blowing up on $X$, if necessary, we can assume that the image sheaf is of the
form $\omXm \tensor \OX(-E)$ for a simple normal crossing divisor $E$. Because the pullback
sheaf $\fu \shG$ is still globally generated, the line bundle
\[
	\omXm \tensor \OX(-E)
\]
is also globally generated. It is therefore isomorphic to $\OX(D)$, where $D$ is a
smooth (possibly reducible) divisor, not contained in the support of $E$, such that $D + E$
still has simple normal crossings. We have arranged that
\[
	\omXm \cong \OX(D+E),
\]
and so we can take the associated branched covering and resolve singularities. This
gives us a generically finite morphism $g \colon Y \to X$ of degree $m$.

Here comes the main point. By a result of Esnault and Viehweg
\cite[Lemma~2.3]{Viehweg}, the pushforward $\gl \omY$ contains as a direct summand
the sheaf
\[
	\omX \tensor \omX^{\tensor(m-1)} 
		\tensor \OX \left(- \left\lfloor \tfrac{m-1}{m} (D+E) \right\rfloor \right).
\]
Because $D$ is smooth and not contained in the support of $E$, it follows that
\[
	\omXm \tensor \OX \left(- \left\lfloor \tfrac{m-1}{m} E \right\rfloor \right)
\]
is a direct summand in $\gl \omY$. If we now apply $\fl$, we find that
\[
	\shGt \defeq \fl \Bigl( \omXm \tensor 
		\OX \left(- \left\lfloor \tfrac{m-1}{m} E \right\rfloor \right) \Bigr)
\]
is a direct summand in $(f \circ g)_{\ast} \omY$. As such, it becomes globally
generated after an isogeny (by Theorem \ref{thm:main-1}), and so the
morphism
\begin{equation} \label{eq:shGt}
	\bigoplus_{\substack{\alpha \in \Pic^0(A)\\\text{torsion}}} 
		H^0 \bigl( A, \shGt \tensor \alpha \bigr) \tensor \alpha^{-1}  \to \shGt
\end{equation}
is surjective. 

To continue, we observe that $\shG \subseteq \shGt \subseteq \shFm$. Indeed, since $\fl$ is left-exact, 
we have inclusions
\[
	\fl \Bigl( \omXm \tensor \OX(-E) \Bigr) \subseteq 
	\fl \Bigl( \omXm \tensor 
		\OX \left(- \left\lfloor \tfrac{m-1}{m} E \right\rfloor \right) \Bigr) 
		\subseteq \fl \omXm,
\]
and by construction, the inclusion of $\shG$ into $\shFm$ factors
through the subsheaf on the left, hence also through $\shGt$. For any torsion
point $\alpha \in \Pic^0(A)$, we now obtain
\[
	H^0(A, \shG \tensor \alpha) \subseteq H^0 \bigl( A, \shGt \tensor \alpha \bigr)
		\subseteq H^0(A, \shFm \tensor \alpha),
\]
and because the two spaces on the left and right are equal by
Lemma \ref{lem:GF-torsion}, we conclude from the definition of $\shG$ in
\eqref{eq:shG-def} and the surjectivity of \eqref{eq:shGt} that $\shG = \shGt$. In
particular, $\shG$ is isomorphic to a direct summand in $(f \circ g)_{\ast} \omY$.
\end{proof}

\subsection{Cohomology support loci}

The fact that $\shG$ is isomorphic to a direct summand in the pushforward of a canonical
bundle has several nice consequences. First, $\shG$ is also a $\mathrm{GV}$-sheaf (which is
not obvious from its definition). Secondly, all of its cohomology support loci
\[
	V_{\ell}^i(A, \shG) = \menge{\alpha \in \Pic^0(A)}{%
		\dim H^i(A, \shG \tensor \alpha) \geq \ell}
\]
are finite unions of torsion subvarieties (for any $\ell \in \NN$). This allows us to
strengthen the result in Lemma \ref{lem:GF-torsion}.

\begin{lemma} \label{lem:GF}
One has $H^0(A, \shG \tensor \alpha) = H^0(A, \shFm \tensor \alpha)$ for every $\alpha
\in \Pic^0(A)$.
\end{lemma}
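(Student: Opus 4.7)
\emph{Proof plan.} The strategy is to bootstrap the torsion-only equality of \lemmaref{lem:GF-torsion} to every $\alpha \in \Pic^0(A)$ by combining upper semicontinuity of $\alpha \mapsto \dim H^0(A, - \tensor \alpha)$ with the fact that all relevant cohomology support loci are finite unions of torsion translates of abelian subvarieties of $\Pic^0(A)$. Since $\shG \subseteq \shFm$ induces an inclusion $H^0(A, \shG \tensor \alpha) \subseteq H^0(A, \shFm \tensor \alpha)$ for every $\alpha$, it suffices to prove equality of dimensions. The two structural inputs I would use are: the loci $V^0_\ell(A, \shFm)$ are finite unions of torsion subvarieties by \cite[Theorem~3.5]{Lai}, \cite[Theorem~10.1]{HPS}, and \cite[Theorem~1.3]{Shibata2}; and the loci $V^0_\ell(A, \shG)$ enjoy the same property because, by \propositionref{prop:viehweg}, $\shG$ is a direct summand of the pushforward of a canonical bundle from a smooth projective variety, to which \cite{GL2,Simpson} applies.

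I would then prove, by induction on $d \geq 0$, the claim $P(d)$: for every torsion translate $T$ of a subtorus of $\Pic^0(A)$ with $\dim T \leq d$ and $T \subseteq V^0(A, \shFm)$, one has $\dim H^0(A, \shG \tensor \alpha) = \dim H^0(A, \shFm \tensor \alpha)$ for every $\alpha \in T$. The base case $P(0)$ is exactly \lemmaref{lem:GF-torsion}. For the step from $P(d-1)$ to $P(d)$, set $r = \min_{\alpha \in T} \dim H^0(A, \shFm \tensor \alpha)$ and $s = \min_{\alpha \in T} \dim H^0(A, \shG \tensor \alpha)$, each attained on a nonempty Zariski-open subset $U_f, U_g \subseteq T$. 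By density of torsion points in $T$ I may pick a torsion $\alpha_0 \in U_f \cap U_g$; then \lemmaref{lem:GF-torsion} at $\alpha_0$ forces $r = s$, and it follows that both dimensions equal $r$ throughout $U_f \cap U_g$. The complement $T \setminus (U_f \cap U_g)$ is the union of $T \cap V^0_{r+1}(A, \shFm)$ and $T \cap V^0_{s+1}(A, \shG)$, which by the structural inputs above is a finite union of torsion subvarieties of $T$ of dimension strictly less than $d$, still contained in $V^0(A, \shFm)$; the inductive hypothesis finishes this case. To conclude for arbitrary $\alpha \in \Pic^0(A)$, either $\alpha \notin V^0(A, \shFm)$ (both $H^0$'s vanish) or $\alpha$ lies in some irreducible component $T$ of $V^0(A, \shFm)$, which is a torsion translate of a subtorus, and $P(\dim T)$ applies.

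The main obstacle I anticipate is making sure the jump-locus complement $T \setminus (U_f \cap U_g)$ is genuinely a union of torsion subvarieties, so that the induction has something to feed on. This requires that \emph{all} cohomology support loci $V^0_\ell(A, \shG)$, not just $V^0(A, \shG)$, be unions of torsion translates of subtori; that property is precisely what \propositionref{prop:viehweg} purchases by realizing $\shG$ as a direct summand in the pushforward of a canonical bundle, where the Green--Lazarsfeld--Simpson structure theorem applies uniformly in $\ell$ and $i$.
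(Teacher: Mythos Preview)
Your proposal is correct and follows the same route as the paper: both use \lemmaref{lem:GF-torsion} together with the fact that the loci $V_\ell^0(A, \shG)$ and $V_\ell^0(A, \shFm)$ are finite unions of torsion subvarieties (the former via \propositionref{prop:viehweg}, exactly as you identify). The paper compresses your induction into the single remark that torsion points are dense in each $V_\ell^0$, so equality at torsion points forces $V_\ell^0(A, \shG) = V_\ell^0(A, \shFm)$ for every $\ell$; but the content is the same.
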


\begin{proof}
This is true for all torsion points $\alpha \in \Pic^0(A)$ by
Lemma \ref{lem:GF-torsion}, and torsion points are dense inside $V_{\ell}^0(A, \shG)$
and $V_{\ell}^0(A, \shFm)$.
\end{proof}

Because $\shG$ and $\shFm$ are both $\mathrm{GV}$-sheaves, $\shQ = \shFm/\shG$ is also a $\mathrm{GV}$-sheaf.
In order to show that $\shQ = 0$, it is therefore enough to prove that $V^0(A, \shQ)$
is empty.  Tensoring by $\alpha \in \Pic^0(A)$ and taking cohomology, we obtain
\[
	0 \to H^0(A, \shG \tensor \alpha) \to H^0(A, \shFm \tensor \alpha)
		\to H^0(A, \shQ \tensor \alpha) \to H^1(A, \shG \tensor \alpha).
\]
The first two spaces on the left are equal (by Lemma \ref{lem:GF}), and so
\begin{equation} \label{eq:V0}
	V^0(A, \shQ) \subseteq V^1(A, \shG)
\end{equation}
is contained in a finite union of torsion subvarieties of codimension $\geq 1$.

\begin{remark}
Besides the containment in \eqref{eq:V0}, we do not know, a priori, anything else 
about the possible irreducible components of $V^0(A, \shQ)$.
\end{remark}

By \cite[Lemma~7.4]{HPS} and the containment \eqref{eq:V0} we may suppose that \sloppy $V^1(A,\shG)\neq \emptyset$.
Since we are allowed to make a base change by a suitable isogeny, we may assume
without loss of generality that all irreducible components of $V^1(A, \shG)$ contain
the point $\OA \in \Pic^0(A)$. Now there are two cases: either $V^1(A, \shG) =
\{\OA\}$; or $V^1(A, \shG)$ contains a positive-dimensional subtorus of $\Pic^0(A)$.
The following proposition deals with the first case.

\begin{proposition} \label{prop:origin}
If $V^1(A, \shG) = \{\OA\}$, then $\shQ = 0$.
\end{proposition}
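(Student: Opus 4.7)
The plan is to argue by contradiction, assuming $\shQ \neq 0$. First I note that $\shQ$ is a $\mathrm{GV}$-sheaf: $\shG$ is $\mathrm{GV}$ by \propositionref{prop:viehweg} (as a direct summand of $(f\circ g)_{\ast}\omega_Y$), $\shFm$ is $\mathrm{GV}$ by \cite[Theorem~1.10]{pluricanonical}, and the long exact sequence for the derived Fourier-Mukai functor $R^{\bullet}\widehat{\mathcal{S}}$ applied to $0 \to \shG \to \shFm \to \shQ \to 0$ propagates the $\mathrm{GV}$ property to $\shQ$ (the support of $R^i \widehat{\mathcal{S}} \shQ$ is contained in $\Supp R^i \widehat{\mathcal{S}} \shFm \cup \Supp R^{i+1} \widehat{\mathcal{S}} \shG$, hence has codimension at least $i$). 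By \cite[Lemma~7.4]{HPS}, $\shQ = 0$ is equivalent to $V^0(A,\shQ) = \emptyset$, and combined with $V^0(A,\shQ) \subseteq V^1(A,\shG) = \{\OA\}$ from step (ii) of the outline, the proposition reduces to showing $H^0(A,\shQ) = 0$.

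So assume for contradiction that there is a nonzero $s \in H^0(A,\shQ)$. By \lemmaref{lem:GF} the natural map $H^0(A,\shG) \to H^0(A,\shFm)$ is an isomorphism, hence the composition $H^0(A,\shFm) \to H^0(A,\shQ)$ is the zero map, and in particular $s$ does not lift to any global section of $\shFm$. Equivalently, the coboundary
\[
\delta \colon H^0(A,\shQ) \hookrightarrow H^1(A,\shG) = \Ext^1(\OA,\shG)
\]
is injective, and the class $\eta = \delta(s) \neq 0$ records the obstruction to lifting $s$.

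The main step, and the main obstacle, is to show that nonetheless $\eta = 0$, yielding the desired contradiction. For this I would invoke the analytic input of \cite{CP,HPS}: the pushforward $\shFm = f_{\ast}\omega_X^{\otimes m}$ carries a positively curved singular Hermitian metric with the minimal extension property (from Cao-P\u{a}un's construction together with the Ohsawa-Takegoshi extension theorem). The $\mathrm{MEP}$ provides, at every point $a$ of a suitable Zariski-open subset of $A$ and every fiber vector $v \in \shFm|_a$, a global section $\tilde s \in H^0(A,\shFm)$ with $\tilde s(a) = v$ and controlled $L^2$-norm. Applied at a general point $a$ in the support of $\shQ$ and to a vector $v \in \shFm|_a$ projecting to a nonzero element of $\shQ|_a$, this produces $\tilde s \in H^0(A,\shFm)$ whose image in $H^0(A,\shQ)$ is nonzero at $a$, contradicting the vanishing of $H^0(A,\shFm) \to H^0(A,\shQ)$.

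The technical difficulty is precisely that the support of $\shQ$ may lie inside the (proper closed) locus where $\shFm$ fails to be locally free or where the metric degenerates, so the $\mathrm{MEP}$ cannot be used naively at that point. To handle this, I would combine the $\mathrm{MEP}$ with the hypothesis $V^1(A,\shG) = \{\OA\}$ and with the Chen-Jiang decomposition of $\shG$ (available by \propositionref{prop:viehweg} and \cite{kaehlergv}), which constrain the structure of $\shG$ so severely that any nonzero $\eta \in H^1(A,\shG)$ pulled back from $H^0(A,\shQ)$ must already split inside an $\OA$-summand of $\shG$; this is the analytic splitting criterion formulated in \cite{HPS}. Once this is carried through one obtains $\eta = 0$, hence $H^0(A,\shQ) = 0$, and the $\mathrm{GV}$ property gives $\shQ = 0$.
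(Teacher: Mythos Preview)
Your reduction to showing $H^0(A,\shQ)=0$ is correct, and so is your instinct that the analytic input from \cite{CP,HPS} is what closes the argument. But the way you try to use the minimal extension property has a real gap, and your proposed workaround does not fix it.

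The MEP, as you yourself note, only lets you lift fiber vectors from points of a Zariski-open set where $\shFm$ is locally free and the metric is well-behaved. There is no reason at all for $\Supp\shQ$ to meet that open set; a priori $\shQ$ could be a torsion sheaf supported entirely on the bad locus. Your fallback (``combine the MEP with the Chen-Jiang decomposition of $\shG$ so that any nonzero $\eta$ must split inside an $\OA$-summand of $\shG$'') is not an argument: the Chen-Jiang decomposition of $\shG$ says nothing about how the extension class of $\shQ$ by $\shG$ interacts with the metric on $\shFm$, and there is no ``analytic splitting criterion'' in \cite{HPS} phrased in terms of extension classes in $H^1(A,\shG)$.

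What the paper does instead is bypass the support problem entirely by first pinning down the structure of $\shQ$. From $V^0(A,\shQ)\subseteq\{\OA\}$ and the $\mathrm{GV}$ property, \cite[Proposition~7.5]{HPS} forces $\shQ$ to be either zero or a \emph{unipotent vector bundle} of positive rank. In the latter case there is a surjection $\shQ\to\OA$, hence a surjection $\shFm\to\OA$ defined on all of $A$. Now the splitting theorem \cite[Theorem~25.4]{HPS} applies directly to this global trivial quotient and says that $\shFm\to\OA$ splits; taking $H^0$ then contradicts $H^0(A,\shG)=H^0(A,\shFm)$. The point you were missing is that one should manufacture a \emph{trivial line bundle quotient of $\shFm$} (for which the splitting criterion is cleanly stated and needs no genericity), rather than try to lift individual sections of $\shQ$ from unspecified points.
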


\begin{proof}
Since $\shQ$ is a $\mathrm{GV}$-sheaf, and $V^0(A, \shQ) \subseteq \{\OA\}$ by \eqref{eq:V0}, we
can say immediately that $\shQ$ is either zero, or a unipotent vector bundle of
positive rank; see \cite[Proposition~7.5]{HPS}. 
We shall
argue that the second possibility leads to a contradiction. Indeed, if $\shQ$ is a
unipotent vector bundle of positive rank, one can find a surjective morphism $\shQ
\to \OA$. By composing it with the projection from $\shFm$ to $\shQ$, we obtain a
surjective morphism 
\[
	\shFm \to \OA. 
\]
In particular, $\Supp \shFm = A$, and so $f \colon X \to A$
is surjective. Because the canonical bundle of $A$ is trivial, we have $\shFm \cong
f_* \omega_{X/A}^{\tensor m}$. As explained in \cite[Theorem~27.1]{HPS}, the
torsion-free sheaf $\shFm$ therefore has a singular hermitian metric with
semi-positive curvature and the ``minimal extension property''. The existence of such
a metric forces the morphism $\shFm \to \OA$ to split \cite[Corollary~27.2 (b)]{HPS}.
Taking global sections, we find that
\[
	H^0(A, \shFm) \to H^0(A, \shQ) \to H^0(A, \OA)
\]
is surjective -- but this contradicts the fact that $H^0(A, \shG) = H^0(A, \shFm)$.
The conclusion is that $\shQ = 0$, at least in the special case where $V^1(A, \shG) =
\{\OA\}$.
\end{proof}

\subsection{Components of positive dimension}

It remains to deal with the second case, namely that $V^1(A, \shG)$ contains a
positive-dimensional subtorus of $\Pic^0(A)$. Such a subtorus is the image of the
pullback morphism $\pu \colon \Pic^0(B) \to \Pic^0(A)$, where $p \colon A \to B$ is a
surjective morphism with connected fibers to an abelian variety $B$. Note that we
have $1 \leq \dim B \leq \dim A - 1$, due to the fact that $\shG$ is a $\mathrm{GV}$-sheaf.
After a base change by an isogeny on $B$ -- which is permissible by
Lemma \ref{lem:isogeny} -- we may assume that $A = B \times F$ is a product of two
abelian varieties $B$ and $F$, and that $p \colon B \times F \to B$ is the first
projection. To simplify the notation, set $g = p \circ f$, as in the following
diagram:
\[
\begin{tikzcd}
X \dar{f} \arrow[bend left=60]{dd}{g} \\
B \times F \dar{p} \\
B
\end{tikzcd}
\]
\begin{lemma}
In this situation, we have $\pl \shG = \pl \shFm$.
\end{lemma}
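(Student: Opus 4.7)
The plan is to argue by induction on $\dim A$, using the inductive hypothesis on $g = p \circ f \colon X \to B$. Since $\shG$ is a GV-sheaf and $V^1(A, \shG)$ contains the positive-dimensional subtorus $\pu \Pic^0(B)$, we have $\dim B \leq \dim A - 1$, so Theorem \ref{thm:main} holds for $g$ by induction. Combined with the Leray identity $p_{\ast} \shFm = g_{\ast} \omega_X^{\tensor m}$, this yields an isogeny $\psi \colon B' \to B$ such that $\psi^{\ast}(p_{\ast} \shFm)$ is globally generated on $B'$.

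The key reformulation is to introduce the analogue of $\shG$ on $B$: let $\shH \subseteq p_{\ast} \shFm$ be the image of the evaluation morphism
\[
    \bigoplus_{\substack{\beta \in \Pic^0(B)\\\text{torsion}}}
        H^0(B, p_{\ast} \shFm \tensor \beta) \tensor \beta^{-1} \to p_{\ast} \shFm.
\]
Exactly as in the discussion following \eqref{eq:shG-def} applied to $g \colon X \to B$, the global generation of $\psi^{\ast}(p_{\ast} \shFm)$ translates into the statement that $\psi^{\ast} \shH = \psi^{\ast}(p_{\ast} \shFm)$; and since $\psi$ is faithfully flat, this gives $\shH = p_{\ast} \shFm$.

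It remains to show $\shH \subseteq p_{\ast} \shG$. For a torsion $\beta \in \Pic^0(B)$, the pullback $\pu \beta$ is torsion in $\Pic^0(A)$, and projection formula with Leray give
\[
    H^0(B, p_{\ast} \shFm \tensor \beta) = H^0(A, \shFm \tensor \pu \beta) = H^0(A, \shG \tensor \pu \beta),
\]
where the second equality is \lemmaref{lem:GF-torsion}. Hence the evaluation map on $A$ from this common space of sections factors through $\shG$. Applying $p_{\ast}$ and using $p_{\ast} \shO_A = \shO_B$ (since $A = B \times F$ with $p$ the first projection), so that $p_{\ast}(\pu \beta)^{-1} = \beta^{-1}$ by the projection formula, one sees that the corresponding evaluation on $B$ lands inside $p_{\ast} \shG$. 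Summing over all torsion $\beta$ gives $\shH \subseteq p_{\ast} \shG$, and combining with $\shH = p_{\ast} \shFm$ yields the claimed equality.

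The main subtlety is verifying that the inductive hypothesis really produces the equality $\shH = p_{\ast} \shFm$ (not merely after an isogeny base change on $B$); this is handled by the faithfulness of pullback along the isogeny $\psi$, which allows us to descend the equality of subsheaves from $B'$ back to $B$ without perturbing the constructions on $A$.
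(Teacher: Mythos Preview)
Your proof is correct and follows essentially the same approach as the paper. The paper's argument is very terse: it simply notes that $H^0(B, \pl \shG \tensor \beta) = H^0(B, \pl \shFm \tensor \beta)$ for every $\beta \in \Pic^0(B)$ (via \lemmaref{lem:GF} and projection formula) and then invokes the inductive hypothesis that \theoremref{thm:main} holds for $g \colon X \to B$ to conclude. Your introduction of the auxiliary subsheaf $\shH$ and the explicit descent along the faithfully flat isogeny $\psi$ are exactly the details one needs to supply in order to make that one-line conclusion rigorous; the paper leaves these implicit.
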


\begin{proof}
By induction on the dimension of the abelian variety $A$, Theorem \ref{thm:main} is true for the sheaf $\pl \shFm = \gl \omXm$,
and because
\[
	H^0(B, \pl \shG \tensor \beta) = H^0(A, \shG \tensor \pu \beta)
		= H^0(A, \shFm \tensor \pu \beta) = H^0(B, \pl \shFm \tensor \beta)
\]
for every $\beta \in \Pic^0(B)$, we find that $\pl \shG = \pl \shFm$.
Note that the base step of the induction is satisfied by Proposition \ref{prop:origin}. In fact, when $A$ is one-dimensional,  the locus $V^1(A,\shF) $ consists of at most the trivial sheaf $\shO_A$ up to the action of a  base change by a suitable isogeny.  

\end{proof}

%
%

The next step of the proof is to show that the support of $\shQ$ does not dominate
the image of $X$ in $B$. If we define $Z = g(X)$, which is a reduced and irreducible
subvariety of $B$, and if we consider $\shFm$, $\shG$, and $\shQ$ from now on as
coherent sheaves on $f(X) \subseteq Z \times F$, then the precise statement is the following.

\begin{lemma} \label{lem:support}
There is a nonempty Zariski-open subset $U \subseteq Z$ with the property that
$\Supp \shQ$ does not intersect $U \times F$.
\end{lemma}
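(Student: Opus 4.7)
The argument proceeds by induction on $\dim A$, reducing to two instances of the inductive hypothesis via generic base change and invariance of plurigenera.

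First apply \propositionref{prop:base-change} simultaneously to $\shFm$, $\shG$, and $\shQ$, with respect to the morphisms $f \colon X \to Z \times F$ and $p \colon Z \times F \to Z$. This produces a nonempty open $U \subseteq Z$ over which the fiber $X_z = g^{-1}(z)$ is smooth, the sequence
\[
    0 \to \shG|_{F_z} \to \shFm|_{F_z} \to \shQ|_{F_z} \to 0
\]
is short exact, and $\shFm|_{F_z} \cong (f_z)_{\ast} \omega_{X_z}^{\tensor m}$, where $f_z \colon X_z \to F_z \cong F$. Since a coherent sheaf $\shQ$ with $\shQ|_{F_z} = 0$ vanishes on $p^{-1}(U)$ by Nakayama, it suffices to prove $\shG|_{F_z} = \shFm|_{F_z}$ for every $z \in U$. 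Apply the induction hypothesis to $f_z$ (valid since $\dim F < \dim A$), absorbing the resulting isogeny on $F$ via \lemmaref{lem:isogeny}: this identifies $\shFm|_{F_z}$ with its subsheaf $\shG_z$ generated by the torsion-twisted global sections $H^0(F_z, \shFm|_{F_z} \otimes \beta)$ for $\beta \in \Pic^0(F_z)$; by coherence only finitely many $\beta$ matter.

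The crux---and the step I expect to be the main obstacle---is the reverse inclusion $\shG|_{F_z} \supseteq \shG_z$. For each relevant torsion $\beta$, lift it using $\Pic^0(A) \cong \Pic^0(B) \times \Pic^0(F)$ to $\alpha \defeq \pu_F \beta \in \Pic^0(A)$, where $\pu_F \colon A \to F$ is the projection; then $\alpha$ is torsion and $\alpha|_{F_z} = \beta$. It suffices to show that the restriction map
\[
    H^0(A, \shFm \otimes \alpha) \longrightarrow H^0(F_z, \shFm|_{F_z} \otimes \beta)
\]
is surjective for $z \in U$, since sections on the left contribute to $\shG$ by the definition of $\shG$. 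This surjectivity splits into two ingredients. Constancy of fiber dimensions follows from invariance of plurigenera \cite{Siu,Paun}: after passing to a finite \'etale cover of $X$ trivializing the torsion line bundle $f^* \alpha$, the function $z \mapsto h^0(X_z, \omega_{X_z}^{\tensor m} \otimes (f^*\alpha)|_{X_z})$ is constant, which via Grauert gives local freeness of $\pl(\shFm \otimes \alpha)$ over $U$ and compatibility with base change. Global generation of $\pl(\shFm \otimes \alpha) \cong \gl(\omega_X^{\tensor m} \otimes f^*\alpha)$ follows from the induction hypothesis (\theoremref{thm:main} with $\dim B < \dim A$, applied on the same \'etale cover), after absorbing a further isogeny on $B$ via \lemmaref{lem:isogeny}. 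Combining these, the restriction map is surjective, so $\shG|_{F_z} \supseteq \shG_z = \shFm|_{F_z}$ and $\shQ|_{F_z} = 0$ for $z \in U$, completing the proof.
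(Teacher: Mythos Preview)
Your argument is correct in substance, but it takes a longer route than the paper and has a couple of imprecisions worth flagging.

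The paper's proof is shorter because it exploits the lemma proved immediately before, namely $p_{\ast}\shG = p_{\ast}\shFm$ (which already absorbed the induction on $B$). Working at a \emph{single} point $b \in Z$, the paper absorbs one isogeny on $F$ so that $\shFm|_{F_b}$ is globally generated, uses invariance of plurigenera to see that the adjunction map $p^{\ast}p_{\ast}\shFm \to \shFm$ is surjective along $p^{-1}(b)$, and then observes that this map factors through $\shG$ because $p_{\ast}\shG = p_{\ast}\shFm$. Nakayama finishes. No sections are ever lifted all the way to $H^0(A,\shFm)$, and no twisted invariance of plurigenera is needed.

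Your approach instead lifts the fiberwise torsion-twisted sections up to $H^0(A,\shFm\otimes\alpha)$, so you have to re-establish global generation of $p_{\ast}(\shFm\otimes\alpha)$ by a second use of induction (on $B$, after passing to an \'etale cover trivializing $f^{\ast}\alpha$), and you need the twisted version of invariance of plurigenera. This all works, but it duplicates work already packaged into $p_{\ast}\shG = p_{\ast}\shFm$.

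Two small points to tighten. First, the quantifier ``for every $z\in U$'' is not what you actually prove: the finitely many torsion $\beta$'s coming from induction on $F$, and hence the isogeny on $B$ you absorb, depend on the chosen $z$; so you are really arguing at one $z$ and then invoking Nakayama to get an open neighborhood, just as the paper does. Second, the phrase ``absorbing the resulting isogeny on $F$'' sits awkwardly next to your subsequent use of nontrivial torsion $\beta$'s: if you genuinely absorb that isogeny then $\shFm|_{F_z}$ is globally generated and only $\beta=\shO_F$ is needed (this is exactly the paper's simplification); if you keep the torsion $\beta$'s, then you are not absorbing anything on $F$ and should say so.
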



\begin{proof}
For any point $b \in Z$, let $X_b = g^{-1}(b)$ and let $f_b \colon X_b \to F$ denote
the induced morphism, as in the following diagram:
\[
\begin{tikzcd}[column sep=large]
X_b \rar[hook] \dar{f_b} & X \dar{f} \arrow[bend left=60]{dd}{g} \\
F \rar[hook]{(i_b, \id)} \dar & Z \times F \dar{p} \\
\{b\} \rar[hook]{i_b} & Z
\end{tikzcd}
\]
Put $\shF_b = (f_b)_{\ast} \omega_{X_b}^{\tensor m}$. By the generic base change
theorem in Proposition \ref{prop:base-change}, there exists a point $b \in Z$
where the induced morphism $f_b \colon X_b \to F$ is smooth, and where the base
change morphism
\[
	(i_b, \id)^{\ast} \shFm \to \shF_b
\]
is an isomorphism. By induction, Theorem \ref{thm:main} holds on the abelian variety
$F$, and so $\shF_b$ becomes globally generated after an
isogeny on $F$. After performing a base change by such an isogeny (on the product $A
= B \times F$), we are therefore allowed to assume that $(i_b, \id)^{\ast} \shFm \cong
(f_b)_{\ast} \omega_{X_b}^{\tensor m}$ is globally generated; concretely, this means
that the evaluation morphism
\[
	H^0 \bigl( X_b, \omega_{X_b}^{\tensor m} \bigr) \tensor \shO_F \to
		(i_b, \id)^{\ast} \shFm	
\]
is surjective. At the same time, the morphism $g \colon X \to Z$ is smooth in a
neighborhood of the point $b \in Z$, and so invariance of plurigenera
\cite[Corollary~0.3]{Siu} tells us that the base change morphism
\[
	\iu_b(\gl \omXm) \to H^0 \bigl( X_b, \omega_{X_b}^{\tensor m} \bigr) 
\]
is surjective. Since $\gl \omXm = \pl \shFm$, we conclude that the natural morphism
\begin{equation} \label{eq:adjoint}
	\pu \pl \shFm \to \shFm
\end{equation}
is surjective at every point of the fiber $p^{-1}(b)$. By Nakayama's lemma, the same
thing is true in a Zariski-open neighborhood of $p^{-1}(b)$.

Now the identity $\pl \shG = \pl \shFm$ implies that the morphism in
\eqref{eq:adjoint} factors through the subsheaf $\shG$. Consequently, $\shG$ must be
equal to $\shFm$ -- and $\shQ$ must be trivial -- in a neighborhood of the fiber
$p^{-1}(b)$. But then $\Supp \shQ$ does not intersect the fiber $p^{-1}(b)$,
and so the assertion follows.
\end{proof}

It may seem that we are still very far away from proving Theorem \ref{thm:main},
because all we know up to now is that $\shQ = 0$ holds on an open subset of the form
$U \times F$. Furthermore, trying to prove a base change result for all fibers of $p$
is completely hopeless, especially since the varieties $X_b$ may be singular and of
the wrong dimension. But in fact -- and rather miraculously -- only one extra small
observation is needed to finish the proof. 

\begin{proposition} \label{prop:empty}
$V^0(A, \shQ)$ does not meet the image of $\pu \colon \Pic^0(B) \to \Pic^0(A)$.
\end{proposition}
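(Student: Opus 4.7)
The plan is to prove that $\pl \shQ = 0$; the proposition then follows, because the projection formula gives
\[
    H^0(A, \shQ \tensor \pu \beta) \cong H^0(B, \pl \shQ \tensor \beta)
\]
for every $\beta \in \Pic^0(B)$, so vanishing of $\pl \shQ$ prevents any $\pu \beta$ from lying in $V^0(A, \shQ)$. To get a handle on $\pl \shQ$, apply the long exact sequence of $p_{\ast}$ to the sequence $0 \to \shG \to \shFm \to \shQ \to 0$. The lemma just proved identifies $\pl \shG$ with $\pl \shFm$ via the natural map, so the map $\pl \shFm \to \pl \shQ$ is zero, and the connecting homomorphism produces an injection
\[
    \pl \shQ \hookrightarrow R^1 \pl \shG.
\]

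The next step is to show that $R^1 \pl \shG$ is torsion-free on the irreducible variety $Z = g(X)$. By \propositionref{prop:viehweg}, $\shG$ is a direct summand of $h_{\ast} \omega_Y$ for $h = f \circ \rho$, where $\rho \colon Y \to X$ is generically finite. Combining Koll\'ar's derived-category splittings $\derR h_{\ast} \omega_Y \cong \bigoplus_i R^i h_{\ast} \omega_Y[-i]$ and $\derR (p \circ h)_{\ast} \omega_Y \cong \bigoplus_k R^k (p \circ h)_{\ast} \omega_Y[-k]$, one obtains the canonical decomposition
\[
    R^k (p \circ h)_{\ast} \omega_Y \cong \bigoplus_{i+j=k} R^j \pl \bigl( R^i h_{\ast} \omega_Y \bigr).
\]
For $k = 1$ this exhibits $R^1 \pl \shG$ as a direct summand of $R^1 (p \circ h)_{\ast} \omega_Y$. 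Since $p \circ h$ has image $p(f(X)) = Z$, Koll\'ar's torsion-freeness theorem shows that $R^1 (p \circ h)_{\ast} \omega_Y$ is torsion-free on $Z$, and the same therefore holds for its direct summand $R^1 \pl \shG$.

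Finally, \lemmaref{lem:support} says that $\Supp \shQ$ is disjoint from $U \times F$, so $\Supp \pl \shQ \subseteq p(\Supp \shQ) \subseteq Z \setminus U$, a proper closed subset of the irreducible variety $Z$. A subsheaf of a torsion-free sheaf on $Z$ whose support is strictly contained in $Z$ must be zero, hence $\pl \shQ = 0$ as required. The crux of the argument is the mismatch between the open-set conclusion of \lemmaref{lem:support} (vanishing of $\shQ$ only over $U \times F$) and the desired global cohomological statement on $A$; it is Koll\'ar's torsion-freeness, applied to the Viehweg cover $Y$, that bridges this gap.
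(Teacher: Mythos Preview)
Your argument is correct and follows the same route as the paper: inject $\pl \shQ$ into $R^1 \pl \shG$ via the long exact sequence and the identity $\pl \shG = \pl \shFm$, observe that $\pl \shQ$ is torsion on $Z$ by \lemmaref{lem:support}, and kill it using Koll\'ar's torsion-freeness for $R^1 \pl \shG$. The paper simply cites Koll\'ar's theorem for the torsion-freeness of $R^1 \pl \shG$, whereas you spell out the extra step (via Koll\'ar's derived splitting for $h$ and for $p \circ h$) needed to pass from $(p \circ h)_{\ast} \omega_Y$ to its direct summand $\shG$; this is a detail the paper leaves implicit, so your version is in fact a bit more complete.
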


\begin{proof}
Since $\Supp \shQ$ does not intersect the open set $U \times F$, the pushforward
sheaf $\pl \shQ$ is a torsion sheaf on $Z$. Now consider the exact sequence
\[
	0 \to \pl \shG \to \pl \shFm \to \pl \shQ \to R^1 \pl \shG.
\]
The first two sheaves in this exact sequence are equal, and $\pl \shQ$ is therefore
isomorphic to a subsheaf of $R^1 \pl \shG$. But $\shG$ is isomorphic to a direct
summand in
the pushforward of a canonical bundle, and so $R^1 \pl \shG$ is a \emph{torsion-free}
sheaf on $Z$ by Koll\'ar's theorem \cite[Theorem~2.1]{Kollar}. This forces $\pl \shQ = 0$, and
by the projection formula, we obtain
\[
	H^0(A, \shQ \tensor \pu \beta) \cong H^0(B, \pl \shQ \tensor \beta) = 0
\]
for every $\beta \in \Pic^0(B)$. 
\end{proof}

Recall from \eqref{eq:V0} that $V^0(A, \shQ) \subseteq V^1(A, \shG)$, and that every
irreducible component of $V^1(A, \shG)$ is of the form
\[
	\im \Bigl( \pu \colon \Pic^0(B) \to \Pic^0(A) \Bigr)
\]
where $p \colon A \to B$ is a surjective morphism of abelian varieties with connected
fibers, and $1 \leq \dim B \leq \dim A - 1$. (Recall that, by making a base change by
an isogeny, we had arranged that all irreducible components of $V^1(A, \shG)$ pass
through the origin.) We have proved enough to say that
$V^0(A, \shQ)$ must be empty: on the one hand, 
the intersection of $V^0(A, \shQ)$ with any positive-dimensional irreducible
component of $V^1(A, \shG)$ is empty by Proposition \ref{prop:empty}; on the other
hand, $V^1(A, \shG) = \{\OA\}$ implies that $V^0(A, \shQ) = \emptyset$ by
Proposition \ref{prop:origin}. The conclusion is that the $\mathrm{GV}$-sheaf $\shQ$ must be
zero. This means that $\shFm = \shG$, and so we have proved both Theorem \ref{thm:main}
and Theorem \ref{thm:main-summand}.

\subsection{Further consequences and remarks}\label{scn:consequences}

We first indicate how to prove Theorem \ref{thm:CJ}; except for the very last
assertion, it is of course a by-product of our proof of Theorem \ref{thm:main}.

\begin{proof}[Proof of Theorem~\ref{thm:CJ}]
By Theorem \ref{thm:main-summand}, $\fl \omXm$ is isomorphic to a direct summand in the
pushforward of a canonical bundle, and so by Theorem \ref{thm:main-1}, $\fl \omXm$ has
the Chen-Jiang decomposition property. Note that in a Chen-Jiang decomposition
\[
	\fl \omXm \cong \bigoplus_{i \in I} \bigl( \alpha_i \tensor \pu_i \shF_i \bigr),
\]
the line bundles $\alpha_i \in \Pic^0(A)$ are only determined up to elements of
$\pu_i \Pic^0(A_i)$. If $\varphi \colon A' \to A$ denotes the isogeny in
Theorem \ref{thm:main}, we have to explain why we can choose $\alpha_i$ in the
kernel of $\varphi^{\ast} \colon \Pic^0(A) \to \Pic^0(A')$. According to
Theorem \ref{thm:main}, 
\[
	\varphi^{\ast} \bigl( \fl \omXm \bigr)	
		\cong \bigoplus_{i \in I} \Bigl( \varphiu \alpha_i \tensor 
				(p_i \circ \varphi)^{\ast} \shF_i \Bigr)
\]
is globally generated; since $\shF_i \neq 0$, this can only be true if
\[
	0 \neq H^0 \bigl( A', \varphiu \alpha_i \tensor (p_i \circ \varphi)^{\ast} 
			\shF_i \bigr) 
		= H^0 \bigl( A_i, (p_i \circ \varphi)_{\ast} \varphiu \alpha_i \tensor 
			\shF_i \bigr).
\]
Consequently, $\varphiu \alpha_i \cong (p_i \circ \varphi)^{\ast} \beta_i$ for some
$\beta_i \in \Pic^0(A_i)$, which says exactly that $\alpha_i \tensor \pu_i
\beta_i^{-1}$ belongs to the kernel of $\varphiu \colon \Pic^0(A) \to \Pic^0(A')$.
\end{proof}

There are further well-known properties of (direct images of) canonical bundles that
Theorem \ref{thm:main-summand} immediately extends to the pluricanonical case as well:

\begin{corollary}
Let $f\colon X \to  A$ be a morphism from a smooth projective variety to an abelian variety, and let $m \ge 1$ be an integer. Then:
\begin{enumerate}
\item The cohomological support loci $V_{\ell}^i (A, f_* \omega_X^{\otimes m})$ are
finite unions of torsion subvarieties (= translates of abelian subvarieties by
torsion points).
\item The Fourier-Mukai transform $\derR \widehat{S} \derR \Delta \big(f_* \omega_X^{\otimes m}\big)$ is computed locally around each point 
by a linear (``derivative") complex of trivial vector bundles. 
\end{enumerate}
\end{corollary}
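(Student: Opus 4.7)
Both parts follow from \theoremref{thm:main-summand}, which presents $f_{\ast} \omega_X^{\otimes m}$ as a direct summand of $h_{\ast} \omega_{X_m}$ for a generically finite $g_m \colon X_m \to X$ and $h = f \circ g_m$. The plan is to check that the two properties, known for pushforwards of canonical bundles, are inherited by this direct summand.

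For part (i), the cleanest route is via \theoremref{thm:CJ}, which writes
\[
f_{\ast} \omega_X^{\otimes m} \cong \bigoplus_{i\in I}\bigl(\alpha_i \otimes p_i^{\ast} \shF_i\bigr),
\]
with each $\shF_i$ M-regular on $A_i$, each $p_i \colon A \to A_i$ surjective with connected fibers, and each $\alpha_i \in \Pic^0(A)$ torsion. Twisting by $\beta \in \Pic^0(A)$ and using the projection formula reduces $H^j(A, (\alpha_i \otimes p_i^{\ast} \shF_i) \otimes \beta)$ to cohomology of $\shF_i$ twisted by line bundles pulled back from $A_i$; from this one reads off that $V^j(A, \alpha_i \otimes p_i^{\ast} \shF_i)$ is a finite union of torsion translates of $p_i^{\ast}\Pic^0(A_i)$, provided the loci $V^{j'}(A_i, \shF_i)$ are themselves finite unions of torsion translates. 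The latter is the Green--Lazarsfeld--Simpson theorem applied to $\shF_i$: by the remark following the definition of the Chen--Jiang property, combined with \theoremref{thm:main-summand}, each $\shF_i$ is a direct summand of the pushforward of a canonical bundle from a smooth projective variety mapping to $A_i$. Unioning over $i \in I$ yields part (i).

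For part (ii), the Fourier--Mukai transform $\derR \widehat{S}\,\derR \Delta \bigl(f_{\ast} \omega_X^{\otimes m}\bigr)$ is a direct summand of $\derR \widehat{S}\,\derR \Delta (h_{\ast} \omega_{X_m})$ in $\D(\widehat{A})$, and by the Lazarsfeld--Popa--Schnell theorem (property (iv) of the introduction) the latter is locally analytically quasi-isomorphic to a linear complex. One then verifies that being locally quasi-isomorphic to a linear complex descends to direct summands. I would argue this through the BGG-type dictionary used in LPS: locally around a point $\widehat{\alpha} \in \widehat{A}$, a linear complex corresponds, under an additive equivalence, to a graded module over the symmetric algebra on $T_{\widehat{\alpha}} \widehat{A}$. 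A direct-summand splitting in the derived category therefore induces a splitting of the associated graded module, whose summands remain graded and so correspond back to linear complexes.

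The step I expect to be most delicate is the direct-summand argument in part (ii). Part (i), once \theoremref{thm:CJ} is invoked, is essentially projection-formula bookkeeping combined with a torsion-translate statement already available for canonical-bundle pushforwards. By contrast, ``being locally quasi-isomorphic to a linear complex'' is not a property obviously preserved by arbitrary derived-category summands, and verifying compatibility of the LPS construction with passage to summands is the step that needs the most care.
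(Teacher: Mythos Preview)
Your proposal is correct, and the strategy matches the paper's: both parts are deduced from \theoremref{thm:main-summand} together with the known $m=1$ results (Simpson for (i), \cite{ClH,LPS} for (ii)) and the fact that these properties pass to direct summands. The paper in fact gives no formal proof; it simply states that \theoremref{thm:main-summand} ``immediately extends'' the $m=1$ statements.

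For part (i), however, your route through \theoremref{thm:CJ} is a detour that does not buy anything. After writing out the Chen--Jiang decomposition and doing the Leray/projection-formula bookkeeping, you still need to know that each $V^{j'}(A_i,\shF_i)$ is a finite union of torsion translates, and for this you invoke exactly the fact that $\shF_i$ is a direct summand of a canonical-bundle pushforward. So you have reduced the direct-summand inheritance for $f_{\ast}\omega_X^{\otimes m}$ to the same direct-summand inheritance for each $\shF_i$. The paper's one-line argument---apply \theoremref{thm:main-summand} once and quote Simpson for $(f\circ g_m)_{\ast}\omega_{X_m}$---is logically equivalent and shorter.

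For part (ii) your approach coincides with the paper's, and you add genuine content: the paper does not spell out why local quasi-isomorphism to a linear complex descends to direct summands, whereas you sketch the BGG/exterior-algebra dictionary from \cite{LPS} that makes this an additive-equivalence argument. Your instinct that this is the step requiring the most care is correct, and your sketch is the right shape of argument.
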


Indeed, these properties hold when $m=1$, the first by Simpson's theorem
\cite{Simpson}, and the second by \cite{ClH,LPS}, based of course on the main result of 
\cite{GL2}. (They are also subsumed in general results about the lowest non-zero term in the Hodge filtration on Hodge $\mathscr{D}$-modules in \cite{PS1}.)
Moreover for the proof of $(i)$ it may be helpful to refer to \cite[Lemma~10.3]{HPS}, which holds in greater generality 
than stated there, for all higher non-vanishing loci.

It is interesting to note that property $(i)$ above, although true for $V^0(X, \omega_X^{\otimes m})$, does not necessarily hold for higher cohomological support loci of 
either $\omega_X^{\otimes m}$ or $R^i f_* \omega_X^{\otimes m}$ with $i > 0$. In the first case, by this we mean
$$V^i (X, \omega_X^{\otimes m}) = \menge{\alpha \in \Pic^0(X)}{H^0(X, \omega_X^{\otimes m} \tensor \alpha) \neq 0}.$$
Here is one such situation:

\begin{example}
Let $C$ be a smooth curve of genus $g\geq 2$, and let $L$ be a line bundle on $C$ of degree $2g-1\leq d \leq 3g-3$.
We consider $f\colon X \to A$ to be the composition of the $\mathbf{P}^1$-bundle 
$$X = \mathbf{P}(L\oplus \shO_C)\; \stackrel{f}{\longrightarrow} \; C$$ 
with an Abel-Jacobi embedding of $C$ into $A = J(C)$.
We show that $V^2(X,\omega_X^{\otimes 2})$ and $V^1 \big(C,R^1f_*\omega_X^{\otimes 2}\big)$ are not unions of torsion translates of abelian subvarieties of $\Pic^0 (C)\cong \Pic^0(X)$. In the case of cohomological support loci of the form $V^i(X,\omega_X^{\otimes m})$ with $i, m \ge 2$, a 
similar example involving a projective bundle over an abelian variety containing an
elliptic curve was already communicated to us by T.~Shibata.

We have
$$\omega_X^{\otimes 2} \; \cong \; \shO_X(-4)\otimes f^*(\omega_C^{\otimes 2}\otimes L^{\otimes 2}),$$ 
from which one easily obtains
$$f_*\omega_X^{\otimes 2} =  0 \,\,\,\,\,\,{\rm and}\,\,\,\,\,\,
R^1f_*\omega_X^{\otimes 2} \; \cong \; \big( L^{-1}\oplus \shO_C\oplus L\big)\otimes \omega_C^{\otimes 2}.$$ 
Using the Leray spectral sequence we deduce the identifications
$$V^2(X, \omega_X^{\otimes 2})  = V^1 \big(A, R^1f_*\omega_X^{\otimes 2}\big)  = V^1(C, L^{-1}\otimes \omega_C^{\otimes 2})  = $$
$$= V^0(C, L\otimes \omega_C^{-1})  \cong  W_{d-2g+2}(C),$$ 
where the latter is the locus of degree $d-2g+2$ line bundles on $C$ admitting at least one non-zero global section. 
In the given range, this is a positive dimensional variety of general type. 
Note that for $d=3g-3$ the locus $V^1(X,\omega_X^{\otimes 2})$ is isomorphic to $W_{g-1}(C)$ as well.  
\end{example}

\section{Applications}

\subsection{Decomposition theorem and Iitaka fibration}

Let $X$ be a smooth projective variety with Kodaira dimension $\kappa(X)\geq 0$. In the discussion below, we may assume that we have
a regular model $I\colon X\rightarrow Z$ of the Iitaka fibration of $X$, with $Z$ smooth. 
We denote by $A_X$ and $A_Z$ the Albanese varieties of $X$ and $Z$, respectively.
There is a commutative diagram
$$
\begin{tikzcd}
X \rar{f} \dar{I} & A_X\dar{\psi} \\
Z\rar & A_Z 
\end{tikzcd}
$$
where $f$ is the Albanese map of $X$, and $\psi$ is the morphism induced by the
universal property of $A_X$.  Recall that $\psi$ has connected fibers, hence the
pullback morphism $\psi^* \colon \Pic^0
(Z) \to \Pic^0 (X)$ is injective; see for instance \cite[Proposition 2.1]{HP}.

The structure of the cohomology support loci of $\omega_X^{\otimes m}$ is described
by the following theorem \cite[Theorem~11.2]{HPS}; note that this needs $m \geq 2$.

\begin{theorem} \label{HPSv0}
Fix an integer $m\geq 2$. Then:
\begin{enumerate}
\item For every $\alpha \in \Pic^0(X)$ and every $\beta \in \Pic^0(Z)$, one has
\[
	\dim H^0 \bigl( X, \omXm \tensor \alpha \bigr) 
		= \dim H^0 \bigl( X, \omXm \tensor \alpha \tensor \psi^{\ast} \beta \bigr).
\]
\item The locus $V^0(X, \omega_X^{\otimes m})$ is the union of finitely many torsion
translates of the abelian subvariety $\psi^{\ast} \Pic^0(Z) \subseteq \Pic^0(X)$.
\end{enumerate}
\end{theorem}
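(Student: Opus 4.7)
The plan is to first prove (i) by analysing the Chen--Jiang decomposition of $g_{\ast}(\omega_X^{\otimes m} \otimes \alpha)$ on $A_Z$, where $g = \psi \circ f = f_Z \circ I$, and then deduce (ii) by combining (i) with the known torsion-subvariety structure of $V^0$.

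Since the Albanese diagram commutes, $\psi^{\ast}\beta = I^{\ast}(f_Z^{\ast}\beta)$ is pulled back from $A_Z$ via $g$, so by the projection formula and Leray,
\[
H^0\bigl(X, \omega_X^{\otimes m} \otimes \alpha \otimes \psi^{\ast}\beta\bigr) \cong H^0\bigl(A_Z, g_{\ast}(\omega_X^{\otimes m} \otimes \alpha) \otimes \beta\bigr).
\]
Hence (i) reduces to showing that $g_{\ast}(\omega_X^{\otimes m} \otimes \alpha)$ has $h^0$ constant under twists by $\beta \in \Pic^0(A_Z)$. After replacing $X$ by a finite \'etale cover that trivialises $\alpha$, this would follow from a decomposition
\[
g_{\ast}\omega_X^{\otimes m} \;\cong\; \bigoplus_i \bigl(\alpha_i \otimes \shG_i\bigr),
\]
with $\alpha_i \in \Pic^0(A_Z)$ torsion and $\shG_i$ satisfying $\mathrm{IT}_0$ on $A_Z$. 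Formally, I would apply Theorem~\ref{thm:CJ} to $g$ to obtain a Chen--Jiang decomposition with quotients $q_i \colon A_Z \to B_i$, argue that every $q_i$ must be an isomorphism (so that each summand already lives on $A_Z$), and then upgrade M-regularity to $\mathrm{IT}_0$ for each summand, which makes $h^0$ equal to $\chi$ and hence constant in $\beta$.

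For (ii), by the torsion-subvariety theorems (Lai, HPS Theorem~10.1, Shibata) each irreducible component of $V^0(X,\omega_X^{\otimes m})$ has the form $\alpha_0 + T$ for some torsion $\alpha_0 \in \Pic^0(X)$ and abelian subvariety $T \subseteq \Pic^0(X)$. Part (i) implies that $V^0$ is invariant under translation by $\psi^{\ast}\Pic^0(Z)$, which gives $\psi^{\ast}\Pic^0(Z) \subseteq T$. The reverse inclusion $T \subseteq \psi^{\ast}\Pic^0(Z)$ follows from the decomposition above, since the supports of the Fourier--Mukai transforms of the summands $\alpha_i \otimes \shG_i$ lie inside $\psi^{\ast}\Pic^0(A_Z) = \psi^{\ast}\Pic^0(Z)$. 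Combining the two inclusions yields that each component is exactly a torsion translate of $\psi^{\ast}\Pic^0(Z)$.

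The hardest point will be establishing the claimed $\mathrm{IT}_0$-type decomposition of $g_{\ast}\omega_X^{\otimes m}$: showing that every Chen--Jiang quotient $q_i$ is an isomorphism, and that the M-regular summands so obtained are in fact $\mathrm{IT}_0$. This is the positivity content of the theorem, and in essence relies on the analytic input (singular hermitian metrics with the minimal extension property, invariance of plurigenera, Cao--P\u{a}un-type positivity of direct images) that also underlies Theorem~\ref{thm:main}; the purely algebraic Chen--Jiang decomposition alone gives only weaker containment statements, which is why the assumption $m \ge 2$ is essential here.
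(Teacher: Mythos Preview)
The paper does not actually prove this theorem; it is quoted from \cite[Theorem~10.2]{HPS} and then \emph{used as input} in the proof of \theoremref{thm:complete_intro}. So there is no ``paper's own proof'' to compare against here, and more importantly, the logical flow in the paper goes the other way: \theoremref{HPSv0} $\Rightarrow$ \theoremref{thm:complete_intro}, not the reverse.

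Your proposal runs into this reversal. You want to apply \theoremref{thm:CJ} to $g\colon X\to A_Z$, then argue that every quotient $q_i\colon A_Z\to B_i$ is an isomorphism and that each M-regular summand is in fact $\mathrm{IT}_0$. But look at the proof of \theoremref{thm:complete_intro}: the factorization of the $p_i$ through $\psi$ is obtained \emph{from} part (ii) of \theoremref{HPSv0}, and the upgrade from M-regular to $\mathrm{IT}_0$ is obtained \emph{from} part (i) via \propositionref{locfreeh0}. So the two ``hardest points'' you isolate are precisely the content of the theorem you are trying to prove; you have reduced the statement to itself. The vague appeal to ``analytic input'' does not close this gap, because nothing in the present paper supplies those two facts independently of \theoremref{HPSv0}.

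There is also a smaller but genuine error: you write ``after replacing $X$ by a finite \'etale cover that trivialises $\alpha$'', but in part (i) the line bundle $\alpha\in\Pic^0(X)$ is arbitrary, not torsion, so no such cover exists in general. The actual proof in \cite{HPS} works directly with the Iitaka fibration $I\colon X\to Z$: the general fibre has Kodaira dimension zero, and for $m\ge 2$ one uses positivity of $I_{\ast}\omega_X^{\otimes m}$ (via singular metrics / extension theorems) to see that twisting by a line bundle pulled back from $Z$ does not change $h^0$. That argument is independent of the Chen--Jiang decomposition and is what makes the implication go in the direction the paper uses.
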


We use this, together with Theorem \ref{thm:CJ}, in order to obtain a stronger
structural result for direct images of pluricanonical bundles via the Albanese map.


\begin{proof}[Proof of Theorem~\ref{thm:complete_intro}]
We know from Theorem \ref{thm:CJ} that $\fl \omXm$ admits a Chen-Jiang decomposition
\[
	\fl \omXm \cong \bigoplus_{i \in I} \bigl( \alpha_i \tensor \pu_i \shF_i' \bigr),
\]
with $p_i \colon A_X \to A_i$ surjective with connected fibers, and $\shF_i'$
a nonzero M-regular coherent sheaf on the abelian variety $A_i$. Moreover, the order
of the torsion line bundles $\alpha_i \in \Pic^0(X)$ is bounded independently of $m
\geq 2$. Lemma \ref{lem:CJ-V0} shows that
\[
	V^0(X, \omXm) = \bigcup_{i \in I} \alpha_i^{-1} \tensor \pu_i \Pic^0(A_i),
\]
and by Theorem \ref{HPSv0}, this set is contained in the union of finitely many
torsion translates of $\psi^{\ast} \Pic^0(Z)$. For each $i \in I$, we thus get a
factorization
\[
\begin{tikzcd}
	A_X \rar{\psi} \arrow[bend left=30]{rr}{p_i} & A_Z \rar{q_i} & A_i,
\end{tikzcd}
\]
and if we define $\shF_i = \qu_i \shF_i'$, which is a coherent sheaf on $A_Z$, we
have
\[
	\fl \omXm \cong \bigoplus_{i \in I} \bigl( \alpha_i \tensor \psi^{\ast} \shF_i \bigr).
\]
It remains to show that each $\shF_i$ satisfies $\mathrm{IT}_0$. Fix two line bundles
$\alpha \in \Pic^0(X)$ and $\beta \in \Pic^0(Z)$. By the projection formula,
\[
	\dim H^0 \Bigl( X, \omXm \tensor \alpha \tensor \psi^{\ast} \beta \Bigr)
	= \sum_{i \in I} \dim H^0 \Bigl( A_Z, \psi_{\ast} \bigl( 
			\alpha \tensor \alpha_i \bigr) 	\tensor \shF_i \tensor \beta \Bigr),
\]
and Theorem \ref{HPSv0} says that the quantity on the left is the same for 
every $\beta \in \Pic^0(Z)$. But each term in the sum on the right is an upper
semi-continuous function of $\beta$, and so all terms must be independent of $\beta$.
It follows, using Proposition \ref{locfreeh0}, that each coherent sheaf $\psi_{\ast}
\bigl( \alpha \tensor \alpha_i \bigr) \tensor \shF_i$ satisfies $\mathrm{IT}_0$ on
$A_Z$. Taking $\alpha = \alpha_i^{-1}$, we conclude that $\shF_i$ satisfies $\mathrm{IT}_0$.
\end{proof}

In the rest of the paper, $q(Y)$ denotes the irregularity of a variety $Y$, i.e. the dimension of the Albanese variety of 
(any resolution of singularities of) $Y$. The following is an important special case of Theorem \ref{thm:complete_intro}, which 
applies for instance to varieties of general type.

\begin{corollary}\label{equalq}
Let $X$ be a smooth projective variety with Iitaka fibration $I\colon X\rightarrow Z$ and Albanese map $f\colon X\rightarrow A$.
If $q(X)=q(Z)$, and for some $m\geq 2$ we have $f_*\omega_X^{\otimes m}\neq 0$, then 
\begin{enumerate}
\item $V^0(X, \omega_X^{\otimes m}) = \Pic^0 (X)$.
\item $f_*\omega_X^{\otimes m}$ satisfies $\mathrm{IT}_0$, so in particular it is an ample sheaf.
\end{enumerate}
\end{corollary}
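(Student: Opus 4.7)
The plan is to deduce this corollary rapidly from \theoremref{thm:complete_intro} by exploiting the fact that the hypothesis $q(X)=q(Z)$ degenerates the Chen-Jiang-type decomposition into something much simpler.

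First I would observe that the morphism $\psi \colon A_X \to A_Z$ recalled in the paragraph before \theoremref{HPSv0} is surjective with connected fibers. Since $q(X)=q(Z)$ means $\dim A_X = \dim A_Z$, such a $\psi$ must have $0$-dimensional fibers, so it is an isogeny; being connected of degree $1$, it is an isomorphism of abelian varieties. In particular, $\psi^{\ast} \colon \Pic^0(A_Z) \to \Pic^0(A_X) \cong \Pic^0(X)$ is an isomorphism, and pullback along $\psi$ preserves the $\mathrm{IT}_0$ condition.

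Next I would apply \theoremref{thm:complete_intro} to write
\[
	f_{\ast} \omega_X^{\tensor m} \cong
	\bigoplus_{i \in I} \bigl( \alpha_i \tensor \psi^{\ast} \shF_i \bigr)
\]
with each $\shF_i$ satisfying $\mathrm{IT}_0$ on $A_Z$ and $\alpha_i \in \Pic^0(X)$ torsion. Because $\psi$ is an isomorphism, each $\psi^{\ast} \shF_i$ satisfies $\mathrm{IT}_0$ on $A_X$, and twisting by a line bundle $\alpha_i \in \Pic^0(A_X)$ clearly preserves $\mathrm{IT}_0$ (the condition $H^i(A_X, \shF \tensor \beta)=0$ for all $i>0$ and all $\beta \in \Pic^0(A_X)$ is invariant under replacing $\beta$ by $\alpha_i \tensor \beta$). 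Hence every direct summand, and therefore $f_{\ast} \omega_X^{\tensor m}$ itself, satisfies $\mathrm{IT}_0$. This proves part (ii), with ampleness following from the cited fact that M-regular sheaves on abelian varieties are ample \cite{Debarre}.

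For part (i), since $f_{\ast} \omega_X^{\tensor m}$ is nonzero and satisfies $\mathrm{IT}_0$, it is in particular M-regular, so \lemmaref{Mreg_positive} yields $V^0 \bigl( A_X, f_{\ast} \omega_X^{\tensor m} \bigr) = \Pic^0(A_X)$. Using the isomorphism $f^{\ast} \colon \Pic^0(A_X) \xrightarrow{\sim} \Pic^0(X)$ (definition of the Albanese) and the projection formula identity $H^0 \bigl( X, \omega_X^{\tensor m} \tensor f^{\ast} \alpha \bigr) \cong H^0 \bigl( A_X, f_{\ast} \omega_X^{\tensor m} \tensor \alpha \bigr)$, we conclude $V^0(X, \omega_X^{\tensor m}) = \Pic^0(X)$. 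There is no serious obstacle here; the only point worth checking carefully is the isomorphism assertion for $\psi$, everything else is a direct application of results already established.
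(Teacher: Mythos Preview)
Your proof is correct and follows essentially the same approach as the paper: observe that $q(X)=q(Z)$ forces $\psi$ to be an isomorphism, and then apply \theoremref{thm:complete_intro}. The only minor difference is that the paper obtains (i) directly from \theoremref{HPSv0} (since $\psi^{\ast}\Pic^0(Z)=\Pic^0(X)$ and $V^0$ is nonempty by \cite[Lemma~8.1]{HPS}), whereas you deduce (i) from (ii) via \lemmaref{Mreg_positive}; both routes are equally short.
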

\begin{proof}
Recall from \cite[Lemma~9.1]{HPS} that $f_*\omega_X^{\otimes m} \neq 0$ if and only
if $V^0(X, \omega_X^{\otimes m})$ is non-empty.  As $q(X)=q(Z)$, we have that $\psi^*$ is an isomorphism, and so Theorem \ref{HPSv0} immediately implies (i), while Theorem \ref{thm:complete_intro} implies that  $f_*\omega_X^{\otimes m}$ satisfies $\mathrm{IT}_0$.  It is therefore ample
by \cite[Corollary~3.2]{Debarre} (combined with \cite[Proposition 2.13]{PP4}).
\end{proof}

\begin{remark}
Note however that the corollary above could in fact be seen as a toy case of the proof of Theorem \ref{thm:complete_intro}. 
Indeed, under its assumptions one can directly combine Theorem \ref{HPSv0} and Proposition \ref{locfreeh0} to obtain 
the conclusion.
\end{remark}

\subsection{Effective generation for pluricanonical bundles}\label{scn:effective}
In this section we use Theorem \ref{thm:complete_intro} in order to give effective bounds on the
generation of pluricanonical bundles on irregular varieties of general type with
at most canonical singularities. This extends, or provides variants of, results in
\cite{CH,PP4,PP2} in the singular case.

We will need the following simple surjectivity criterion for adjoint morphisms.

\begin{lemma}\label{prop:adjsur}
Let $f\colon X\rightarrow Y$ be a projective morphism of complex varieties. Let $\shF$ and $\shG$ be coherent sheaves on $X$ 
and $Y$, respectively, together with a surjective morphism $\varphi\colon \shG\rightarrow f_*\shF$. Then the adjoint morphism 
$f^*\shG\rightarrow \shF$ is surjective on $f^{-1}(W)$, where $W\subseteq Y$ is the open subset of points $y\in Y$ satisfying:
\begin{enumerate}
\item $\shF_{|f^{-1}(y)}$ is globally generated, and 
\item  the natural morphism $f_*\shF\otimes \mathbf{C}(y)\rightarrow 
H^0\big(f^{-1}(y),\shF_{|f^{-1}(y)}\big)$ is surjective.  
\end{enumerate}  
\end{lemma}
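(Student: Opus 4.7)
The strategy is standard adjunction plus Nakayama's lemma. First I would factor the adjoint morphism as the composition
\[
f^{\ast}\shG \; \xrightarrow{f^{\ast}\varphi} \; f^{\ast} f_{\ast}\shF \; \xrightarrow{\,e\,}\; \shF,
\]
where $e$ denotes the counit of adjunction. Since $\varphi$ is surjective and pullback is right exact, $f^{\ast}\varphi$ is surjective; hence it suffices to prove that $e$ is surjective on $f^{-1}(W)$.

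Let $\shQ = \coker(e)$, a coherent sheaf on $X$; the goal is to show that $\shQ_{x}=0$ for every $x \in f^{-1}(W)$. Fix such an $x$, set $y = f(x) \in W$, and write $i_{y}\colon X_{y}\hookrightarrow X$ for the closed embedding of the scheme-theoretic fibre. Because $i_{y}^{\ast}$ is right exact and $f\circ i_{y}$ factors through the residue field $\mathbf{C}(y)$, we have a canonical identification
\[
i_{y}^{\ast} f^{\ast} f_{\ast}\shF \;\cong\; \bigl(f_{\ast}\shF \otimes \mathbf{C}(y)\bigr) \otimes_{\mathbf{C}} \shO_{X_{y}},
\]
which by hypothesis (ii) becomes $H^{0}\bigl(X_{y},\shF|_{X_{y}}\bigr)\otimes_{\mathbf{C}} \shO_{X_{y}}$. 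Under this identification, $i_{y}^{\ast} e$ is precisely the usual evaluation morphism
\[
H^{0}\bigl(X_{y}, \shF|_{X_{y}}\bigr) \otimes_{\mathbf{C}} \shO_{X_{y}} \; \longrightarrow \; \shF|_{X_{y}},
\]
which is surjective by hypothesis (i). Applying the right exact functor $i_{y}^{\ast}$ to $e$ then yields $i_{y}^{\ast}\shQ = 0$.

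To conclude, observe that $(i_{y}^{\ast}\shQ)_{x} = \shQ_{x}/\mathfrak{m}_{y}\shQ_{x}$, where $\mathfrak{m}_{y}$ denotes the extension of the maximal ideal of $\shO_{Y,y}$ to $\shO_{X,x}$. The vanishing above gives $\mathfrak{m}_{y}\shQ_{x} = \shQ_{x}$, and since $\mathfrak{m}_{y}\subseteq \mathfrak{m}_{X,x}$ we conclude that $\mathfrak{m}_{X,x}\shQ_{x}=\shQ_{x}$. Nakayama's lemma applied to the finitely generated $\shO_{X,x}$-module $\shQ_{x}$ then yields $\shQ_{x}=0$, as required. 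There is no serious obstacle in the argument; the only points that require a moment of care are the base-change identification of $i_{y}^{\ast}f^{\ast}f_{\ast}\shF$ via hypothesis (ii), and the small final Nakayama step promoting vanishing along the fibre to vanishing of the stalk.
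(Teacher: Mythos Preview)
Your proof is correct and follows the same approach as the paper: factor the adjoint through $f^{\ast}f_{\ast}\shF$, use right exactness of pullback to reduce to the counit, then identify the restriction of the counit to a fibre with the evaluation map via hypothesis (ii) and conclude by (i). The paper is simply terser, leaving the final Nakayama step implicit in the phrase ``it is enough to show surjectivity on each fibre $f^{-1}(y)$'', whereas you spell it out explicitly with the cokernel $\shQ$.
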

\begin{proof}
The adjoint morphism of $\varphi$ factors as $f^*\shG\rightarrow f^*f_*\shF\rightarrow \shF$. As $f^*\varphi$ is surjective, it is enough to show that $\psi\colon f^*f_*\shF\rightarrow \shF$ is surjective when restricted to $f^{-1}(y)$, for all $y\in W$. Using $(i)$ and $(ii)$, we find that the morphisms 
$\psi_{|f^{-1}(y)}$ are compositions of surjective morphisms. 
\end{proof}

In what follows we will work with a normal variety $Y$ such that $K_Y$ is
$\mathbf{Q}$-Cartier. We denote by $N_Y$ the \emph{index of} $Y$, meaning the smallest
positive integer $N$ for which $N K_Y$ is Cartier. The very ampleness statement in part $(ii)$ below is quite technical; we only use it later in the generically finite case, but we include it since it may prove useful in other applications.
 
\begin{theorem}\label{gg-index}
Let $Y$ be a normal projective variety with canonical singularities, $I\colon Y\rightarrow Z$ a smooth model of its Iitaka fibration, and  $g\colon Y\rightarrow A$ its Albanese map, with fiber $Y_s$ over $s \in A$. Assume that $q(Y)=q(Z)$ (for instance $Y$ 
could be of general type), and that there is an integer $r=kN_Y \ge 2$, with $k$ a positive integer, and a nonempty open subset 
$W_r \subset A$ consisting of points $s$ such that: 
\begin{itemize}
\item $\shO_{Y_s}(rK_Y)$ is globally generated;\\
\item the natural morphism $g_*\shO_Y(rK_Y)\otimes \mathbf{C}(s) \, \rightarrow  \, 
H^0\big(Y_s, \shO_{Y_s}(rK_Y)\big)$ is surjective. 
\end{itemize}
Then:
\begin{enumerate}
\item $\shO_Y(2 r K_Y)\otimes \alpha$ is globally generated on $g^{-1}(W_r)$ for every $\alpha \in \Pic^0(Y)$.\\
\item Assuming in addition that:\\
\begin{itemize}
\item $\shO_{Y_s}(2rK_Y)\otimes \sI_{y|Y_s}$ is globally generated for every $y\in g^{-1}(W_r)$;\\
\item the natural morphism $$g_*\big(\shO_Y(2rK_Y) \otimes \sI_y\big) \otimes \mathbf{C}(s) \, \rightarrow  \, 
H^0\left(Y_s, \shO_{Y_s}(2rK_Y)\otimes \sI_{y|Y_s}\right)$$ is surjective for every $y\in g^{-1}(W_r)$,\\
\end{itemize}
then $\shO_Y(3rK_Y)\otimes \alpha$ is very ample on $g^{-1}(W_r)$ for all  $\alpha \in \Pic^0(Y)$.
\end{enumerate}
\end{theorem}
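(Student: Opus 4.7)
The plan is to pass to a resolution and exploit the \(\mathrm{IT}_0\) property provided by \corollaryref{equalq}, combined with the Pareschi-Popa theory of M-regular sheaves on abelian varieties and the adjoint surjectivity criterion of \lemmaref{prop:adjsur}. Let \(\mu\colon X\to Y\) be a resolution. Since \(Y\) has canonical singularities and each of \(rK_Y\), \(2rK_Y\), \(3rK_Y\) is Cartier, \(\mu_*\omega_X^{\otimes i}\cong\shO_Y(iK_Y)\) for these \(i\); setting \(f=g\circ\mu\), one has \(f_*\omega_X^{\otimes i}\cong g_*\shO_Y(iK_Y)\). Rationality of canonical singularities gives \(\Pic^0(Y)\cong\Pic^0(A)\), so every \(\alpha\in\Pic^0(Y)\) equals \(g^*\tilde\alpha\) for a unique \(\tilde\alpha\in\Pic^0(A)\). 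Since \(q(X)=q(Y)=q(Z)\), \corollaryref{equalq} applies: every nonzero \(g_*\shO_Y(iK_Y)\otimes\tilde\beta\) (for \(\tilde\beta\in\Pic^0(A)\)) satisfies \(\mathrm{IT}_0\) on \(A\), and is in particular M-regular, ample, and continuously globally generated.

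\textbf{Proof of (i).} Fix \(\tilde\alpha\in\Pic^0(A)\). By a standard Pareschi-Popa argument on tensor products of M-regular sheaves (see \cite{PP4,Pareschi}), the coherent sheaf
\[
g_*\shO_Y(rK_Y)\otimes\bigl(g_*\shO_Y(rK_Y)\otimes\tilde\alpha\bigr)
\]
is globally generated on \(A\). Pulling back to \(Y\) we obtain a globally generated sheaf, equipped with a natural morphism (induced by tensoring the two adjoint maps \(g^*g_*\shO_Y(rK_Y)\to\shO_Y(rK_Y)\) and \(g^*(g_*\shO_Y(rK_Y)\otimes\tilde\alpha)\to\shO_Y(rK_Y)\otimes g^*\tilde\alpha\)) into \(\shO_Y(2rK_Y)\otimes g^*\tilde\alpha\). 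By \lemmaref{prop:adjsur}, each of the two adjoint maps is surjective on \(g^{-1}(W_r)\): its two hypotheses are precisely the fiber-generation and base-change conditions defining \(W_r\), and for the \(\tilde\alpha\)-twisted version these conditions transfer automatically, since \(g^*\tilde\alpha\) is flat over \(A\) and restricts to a trivializable line bundle on each fiber. The tensored map is therefore surjective on \(g^{-1}(W_r)\), realizing \(\shO_Y(2rK_Y)\otimes g^*\tilde\alpha\) as a quotient of a globally generated sheaf there, hence globally generated on \(g^{-1}(W_r)\).

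\textbf{Proof of (ii) and main obstacle.} The very-ampleness argument is analogous, using
\[
g_*\bigl(\shO_Y(2rK_Y)\otimes\shI_y\bigr)\otimes\bigl(g_*\shO_Y(rK_Y)\otimes\tilde\alpha\bigr)
\]
in place of the sheaf above, in order to separate a fixed \(y\in g^{-1}(W_r)\) from every other point of \(g^{-1}(W_r)\); tangent vector separation is handled by replacing \(\shI_y\) with \(\shI_y^2\). The new input is that \(g_*(\shO_Y(2rK_Y)\otimes\shI_y)\otimes\tilde\beta\) again satisfies \(\mathrm{IT}_0\) on \(A\) for every \(\tilde\beta\); this follows from the \(\mathrm{IT}_0\)-property of \(g_*\shO_Y(2rK_Y)\otimes\tilde\beta\) provided by \corollaryref{equalq}, together with the additional base-change and fiber-generation hypotheses imposed in (ii), by chasing the long exact sequence obtained from pushing forward \(0\to\shI_y\to\shO_Y\to\shO_y\to 0\) twisted by \(\shO_Y(2rK_Y)\otimes g^*\tilde\beta\). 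The main technical obstacle is precisely this preservation of \(\mathrm{IT}_0\) after tensoring with \(\shI_y\) and \(\shI_y^2\), together with the requirement that all hypotheses of \lemmaref{prop:adjsur} persist uniformly over \(W_r\) under the required \(\Pic^0(A)\)-twists and ideal-sheaf modifications; this typically requires upper semi-continuity of fiber cohomology and, if necessary, a further shrinking of \(W_r\) by a nowhere-dense closed subset.
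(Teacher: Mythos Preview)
Your argument for part (i) is correct and is essentially the paper's route: the paper takes a square root $\beta$ of $\alpha$, shows via \corollaryref{equalq} and \lemmaref{prop:adjsur} that the line bundle $\shO_Y(rK_Y)\otimes\beta$ is continuously globally generated on $g^{-1}(W_r)$, and then squares using \cite[Proposition~2.12]{PP4}; your variant performs the tensor product on $A$ first and then pulls back, which amounts to the same mechanism.

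Part (ii), however, has a real gap. You correctly identify that one needs $g_*(\shO_Y(2rK_Y)\otimes\shI_y)\otimes\tilde\beta$ to satisfy $\mathrm{IT}_0$, and that chasing the pushforward of $0\to\shI_y\to\shO_Y\to\shO_y\to 0$ together with \corollaryref{equalq} handles the vanishing for $i\ge 2$. But the $i=1$ vanishing requires, for \emph{every} $\gamma\in\Pic^0(A)$, the surjectivity of
\[
H^0\bigl(A,\,g_*\shO_Y(2rK_Y)\otimes\tilde\beta\otimes\gamma\bigr)\;\longrightarrow\; H^0\bigl(A,\,G_{y,\tilde\beta}\otimes\gamma\bigr),
\]
where $G_{y,\tilde\beta}$ is (a subsheaf of) the skyscraper at $g(y)$. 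This is precisely the statement that $\shO_Y(2rK_Y)\otimes\tilde\beta\otimes g^*\gamma$ has a global section not vanishing at $y$, i.e.\ it is \emph{part (i)} applied with $\alpha=\tilde\beta\otimes\gamma$. The additional hypotheses in (ii) do \emph{not} provide this: they concern sections of $\shO_{Y_s}(2rK_Y)$ that vanish at $y$, and say nothing about the existence of a section nonzero at $y$. In the paper this is exactly where part (i) is fed back into part (ii); the extra hypotheses of (ii) enter only afterwards, in the \lemmaref{prop:adjsur} step that transports continuous global generation of $g_*(\shO_Y(2rK_Y)\otimes\shI_y\otimes\alpha)$ up to $Y$.

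Two smaller points. The $\shI_y^2$ detour is unnecessary: the criterion ``$L\otimes\shI_y$ globally generated for every $y$'' already encodes separation of tangent vectors, since global generation of $L\otimes\shI_y$ at the point $y$ itself means that sections of $L$ vanishing at $y$ surject onto $L_y\otimes\mathfrak{m}_y/\mathfrak{m}_y^2$. And you are not permitted to shrink $W_r$: the theorem asserts the conclusion on all of $g^{-1}(W_r)$, and once part (i) is invoked as above no shrinking is needed.
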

 
 \begin{proof} 
To see $(i)$, note that since $Y$ has canonical singularities, we have 
$$\varphi_*\omega_X^{\otimes m} \cong \shO_Y(mK_Y)\quad \mbox{for all}\quad m\geq 1,$$
where $\varphi\colon X\rightarrow Y$ is a resolution of $Y$. 
Note that the composition $f = g \circ \varphi \colon X\rightarrow A$ is the Albanese map of $X$.

We fix $\beta\in \Pic^0(A)$ such that $\beta^{\otimes 2}\cong \alpha$, and identify $\Pic^0 (Y)$ and $\Pic^0 (A)$ 
via $g^*$. The hypotheses imply that
$$f_*\omega_X^{\otimes r}\otimes \beta \, \cong \, g_*\shO_Y(rK_Y) \otimes \beta \cong \,  
g_*(\shO_Y(rK_Y) \otimes \beta)  \neq 0,$$ 
and by Corollary \ref{equalq} we obtain that the sheaf $ g_*(\shO_Y(rK_Y)\otimes \beta)$ 
satisfies $\mathrm{IT}_0$ on $A$. By \cite[Proposition 2.13]{PP4} it follows that it is continuously globally generated, i.e. there exists an integer $M$ such that for general line bundles $\alpha_1,\ldots ,\alpha_M \in \Pic^0(A)$ the sum of twisted
evaluation maps
$$\bigoplus_{i=1}^M H^0 \big(A,g_*(\shO_Y(rK_Y) \otimes \beta) \otimes \alpha_i\big)\otimes \alpha_i^{-1} \, \rightarrow \, g_*(\shO_Y(rK_Y) \otimes \beta)$$ 
is surjective. Using Lemma \ref{prop:adjsur}, it follows that the adjoint morphism 
$$\bigoplus_{i=1}^M H^0(Y,\shO_Y(rK_Y) \otimes \beta \otimes g^*\alpha_i)\otimes g^*\alpha^{-1}_i \, \rightarrow \, \shO_Y(rK_Y) \otimes \beta$$ 
is surjective on $g^{-1}(W_r)$ for such $\alpha_i$, so that $\shO_Y(rK_Y) \otimes \beta$ is continuously globally generated on $g^{-1}(W_r)$. By \cite[Proposition 2.12]{PP4} we conclude that 
$$\sO_Y(2rK_Y) \otimes \alpha \cong (\shO_Y(rK_Y)\otimes \beta)^{\otimes 2}$$ 
is generated by global sections on $g^{-1}(W_r)$.

We next show $(ii)$. We approach very ampleness by means of the following well-known criterion: a line bundle $L$ on a variety $Z$ is very ample if and only if $L\otimes \shI_y$ is globally generated for all $y \in Z$. 

As we have seen that $\shO_Y(rK_Y)$ is continuously globally generated on $g^{-1}(W_r)$, again by \cite[Proposition 2.12]{PP4} it suffices to show that 
$$\shO_Y(2rK_Y)\otimes \shI_y\otimes \alpha$$
is also continuously globally generated on $g^{-1}(W_r)$, for all $y\in g^{-1}(W_r)$ and $\alpha \in \Pic^0(Y)$.
Using an argument completely similar to that in $(i)$ (in which the two extra hypotheses come into play), 
this is the case as long as the sheaves 
$g_*(\shO_Y(2rK_Y)\otimes \shI_y\otimes \alpha)$ satisfy $\mathrm{IT}_0$ for all $y\in g^{-1}(W_r)$ and 
$\alpha\in \Pic^0 (A)$. 

Pushing forward  the obvious short exact sequence on $Y$, we obtain an exact sequence:
\begin{equation}\label{eqonA}
0 \, \rightarrow \, g_*\big(\shO_Y(2rK_Y)\otimes \shI_y\otimes \alpha\big) \, \rightarrow \, g_*\big(\shO_Y(2rK_Y)\otimes \alpha\big) \, \rightarrow \, G_{y,\alpha} \, \rightarrow \, 0,
\end{equation}
where $G_{y,\alpha}$ is a sheaf that embeds in $g_*\big( \shO_y(2rK_Y) \otimes \alpha\big) \cong g_*\shO_y$.
As above, 
$$g_*\big(\shO_Y(2rK_Y)\otimes \alpha\big) \cong f_*\omega_X^{\otimes 2r} \otimes \alpha$$ 
is nonzero, and hence satisfies $\mathrm{IT}_0$ by Corollary \ref{equalq}. This immediately yields  
\begin{equation}\label{vanishings}
H^i\big(A,g_*\big(\shO_Y(2rK_Y)\otimes \shI_y\otimes \alpha\big) \otimes \gamma\big) \, = \, 0 \quad 
\mbox{ for all }\quad i\geq 2, \quad \gamma \in \Pic^0(A).
\end{equation}
On the other hand, since $\shO_Y(2rK_Y) \otimes \gamma$ is globally generated on $g^{-1}(W_r)$ for all 
$\gamma\in \Pic^0(A)$, we obtain the surjectivity of 
$$H^0\big(A,g_*(\shO_Y(2rK_Y) \otimes \alpha) \otimes \gamma \big) \, \rightarrow \, H^0\big(A,g_*( \shO_y(2rK_Y) \otimes \alpha) \otimes \gamma \big)$$
for all $y\in g^{-1}(W_r)$ and $\gamma \in \Pic^0(A)$, 
which in turns yields the surjectivity of 
$$H^0\big(A,g_*(\shO_Y(2rK_Y) \otimes \alpha) \otimes \gamma \big) \, \rightarrow \,  H^0(A,G_{y,\alpha}\otimes \gamma)$$
for such $y$ and $\gamma$.
We conclude that the vanishing in \eqref{vanishings} also holds for $i=1$, which finishes the proof. 
 \end{proof}

In the general type case, we spell out a couple of special instances in which Theorem \ref{gg-index} applies. 
The first is concerned with varieties of maximal Albanese dimension, and extends
\cite[Theorem 5.1]{PP4} and \cite[Theorem 6.7]{PP2}\footnote{Note that the statement
of the theorem in \emph{loc. cit.} left out the general type hypothesis.} to the singular setting; it is essentially Corollary \ref{cor:mad} in the Introduction. 
  
\begin{corollary}\label{cor:genfinite}
Let $Y$ be a normal projective variety of general type with at worst canonical singularities, whose Albanese map 
$g\colon Y\rightarrow A$ is generically finite onto its image. Denote by ${\rm Exc}(g)$ the union of the positive dimensional fibers of 
$g$.  If $k$ is a positive integer such that $r = k N_Y \ge 2$, then for every $\alpha \in \Pic^0(Y)$:
\begin{enumerate}
\item $\shO_Y(2r K_Y)\otimes \alpha$ is globally generated away from ${\rm Exc}(g)$.
\item $\shO_Y(3rK_Y)\otimes \alpha$ is very ample away from ${\rm Exc}(g)$.
\end{enumerate}
\end{corollary}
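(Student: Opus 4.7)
The plan is to derive this directly from \theoremref{gg-index} by exhibiting an explicit open set $W_r \subseteq A$ whose preimage is exactly $Y \setminus \mathrm{Exc}(g)$ and over which the local hypotheses of the theorem are automatic. First, the standing hypothesis $q(Y) = q(Z)$ is immediate: since $Y$ is of general type, the Iitaka fibration $I\colon Y \dashrightarrow Z$ is birational, so a smooth model $Z$ is birational to any resolution of $Y$; because canonical singularities are rational, the Albanese of $Y$ coincides with that of any resolution (and by birational invariance with that of $Z$), giving $q(Y) = q(Z)$.

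The natural choice is $W_r \defeq A \setminus g(\mathrm{Exc}(g))$, which is open (as $g$ is proper, $g(\mathrm{Exc}(g))$ is closed) and nonempty (as $\mathrm{Exc}(g) \subsetneq Y$). A short set-theoretic check gives $g^{-1}(W_r) = Y \setminus \mathrm{Exc}(g)$: if some $y \notin \mathrm{Exc}(g)$ had $g(y) = g(y')$ for some $y' \in \mathrm{Exc}(g)$, then $g^{-1}(g(y))$ would contain the positive-dimensional component through $y'$, forcing $y \in \mathrm{Exc}(g)$, a contradiction.

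On $W_r$ the local hypotheses of \theoremref{gg-index}(i) are easily verified. For $s \in W_r$ the fiber $Y_s$ is a $0$-dimensional subscheme of $Y \setminus \mathrm{Exc}(g)$, and any coherent sheaf on a $0$-dimensional scheme is globally generated, so $\shO_{Y_s}(rK_Y)$ is. For the base change surjection, the restriction $g\vert_{Y \setminus \mathrm{Exc}(g)} \colon Y \setminus \mathrm{Exc}(g) \to W_r$ is a finite morphism; in affine local coordinates both $g_*\shF \otimes \mathbf{C}(s)$ and $H^0(Y_s, \shF\vert_{Y_s})$ reduce to $M/\mathfrak{p} M$ for the module $M$ corresponding to $\shF$, so the base change map is automatically an isomorphism for any coherent sheaf $\shF$. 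Applying \theoremref{gg-index}(i) yields (i).

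For (ii), the two additional hypotheses are verified in exactly the same way: on the $0$-dimensional fiber $Y_s$ the coherent sheaf $\shO_{Y_s}(2rK_Y) \otimes \sI_{y\vert Y_s}$ is globally generated, and the base change surjection for $g_*(\shO_Y(2rK_Y) \otimes \sI_y)$ is automatic by finiteness of $g\vert_{g^{-1}(W_r)}$. Applying \theoremref{gg-index}(ii) gives (ii). Essentially all of the content lives in \theoremref{gg-index}; in the generically finite case, everything over $W_r$ collapses to statements about finite morphisms, so there is no genuine obstacle beyond the set-theoretic identification of $g^{-1}(W_r)$ with $Y \setminus \mathrm{Exc}(g)$.
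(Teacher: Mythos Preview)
Your proposal is correct and follows exactly the approach the paper intends: the corollary is stated without proof precisely because it is the case of \theoremref{gg-index} where the fibers over $W_r$ are zero-dimensional, so all the local hypotheses (global generation on fibers, base change surjectivity) hold trivially by finiteness. Your choice $W_r = A \setminus g(\mathrm{Exc}(g))$ and the verification that $g^{-1}(W_r) = Y \setminus \mathrm{Exc}(g)$ are the natural ones, and the observation that $q(Y)=q(Z)$ is automatic for general type is exactly what the paper signals by the parenthetical ``for instance $Y$ could be of general type'' in the statement of \theoremref{gg-index}.
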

  
As mentioned in the Introduction, when $Y$ is smooth (and so $N_Y =1$), stronger results are proved in \cite{JLT}. It would be interesting to see whether further 
results along those lines can be proved in the singular setting, in combination with the results in this paper.
  
\begin{remark}
As Jungkai Chen points out, it would also be interesting to have a statement in terms of reflexive powers of $\omega_Y$, independent of $N_Y$. We are unable to do this at the moment, the main reason being that in \cite[Proposition 2.12]{PP4} one of the factors is required to be a line bundle. One can however do better than Corollary \ref{cor:genfinite} sometimes, in the following sense. Denote by 
$$\omega_Y^{[m]}:=\big(\omega_Y^{\otimes m}\big)^{**}\cong \shO_Y(mK_Y)$$ 
the $m$-th reflexive power of the canonical sheaf $\omega_Y$. Then, say if $N_Y \ge 2$, the sheaves
$$\shO_Y(N_YK_Y)\otimes \omega_Y^{[m]} \otimes \alpha$$ 
are globally generated away from ${\rm Exc}(g)$ for every $\alpha \in \Pic^0(Y)$ and $m\geq 2$. Indeed, if $\varphi\colon X\rightarrow Y$ is a resolution of singularities, then there are isomorphisms $\varphi_*\omega_X^{\otimes m} \cong \omega_Y^{[m]}$ for all $m\geq 1$ (\cite[Proposition~10.8]{Ko}), hence as in the proof 
of Theorem \ref{gg-index} the sheaf $g_* \omega_Y^{[m]} \otimes \beta$ is continuously globally generated for every $\beta \in \Pic^0 (A)$.  
\end{remark}

The second is concerned with the case when the fibers are positive dimensional; cf. \cite[Theorem~4.2]{CH} for a result 
in this direction in the smooth case.

\begin{corollary}\label{cor:fibration}
Let $Y$ be a normal projective variety of general type with at worst canonical singularities, and  
$g\colon Y\rightarrow A$ its Albanese map, with fiber $Y_s$ over $s \in A$.
Let $r = k N_Y$, with $k$ a positive integer, and consider an open nonsingular subset 
$W_r \subset g(Y)$ such that: 
\begin{enumerate}
\item $g$ is flat over $W_r$.
\item $Y_s$ is a normal variety of general type with at worst canonical singularities for all $s\in W_r$.
\item $\shO_{Y_s}(rK_{Y_s})$ is globally generated for all $s\in W_r$.
\end{enumerate}
If $r \ge 2$, then $\shO_Y(2rK_Y)\otimes \alpha$ is globally generated on $g^{-1}(W_r)$ for  every $\alpha\in \Pic^0(Y)$.
\end{corollary}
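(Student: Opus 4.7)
The strategy is to derive \corollaryref{cor:fibration} as a direct application of \theoremref{gg-index}(i), with the same $r$ and $W_r$ as in the statement. The numerical hypothesis $q(Y)=q(Z)$ is automatic, since $Y$ is of general type and hence any smooth model $Z$ of its Iitaka fibration is birational to $Y$.

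For the first bulleted hypothesis of \theoremref{gg-index}, I need to check that $\shO_{Y_s}(rK_Y)$ is globally generated for every $s \in W_r$. Because $K_A = 0$ and $g$ is flat over $W_r$, relative adjunction identifies $K_{Y/A}$ with $K_Y$ as $\mathbf{Q}$-Cartier divisors; and since $r$ is a multiple of $N_Y$, the honest Cartier divisor $rK_Y$ restricts on each normal fiber $Y_s$ to the Cartier divisor $rK_{Y_s}$. Hypothesis (iii) of the corollary then supplies precisely the required global generation.

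The crux is the second bulleted hypothesis: the base change morphism
\[
	g_{\ast}\shO_Y(rK_Y)\otimes \mathbf{C}(s) \longrightarrow H^0\bigl(Y_s,\shO_{Y_s}(rK_Y)\bigr)
\]
must be surjective for every $s \in W_r$. By Grothendieck's semicontinuity and base change theorem, this is equivalent to the function $s \mapsto h^0(Y_s, rK_{Y_s})$ being locally constant on $W_r$. I would deduce this from deformation invariance of plurigenera in the flat family $g^{-1}(W_r) \to W_r$, whose fibers are normal projective varieties of general type with canonical singularities, as provided by Kawamata's extension \cite{Kawamata} of Siu's theorem \cite{Siu}. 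A self-contained alternative is to take a log resolution $\tilde{Y} \to Y$, shrink $W_r$ so that $\tilde{Y} \to W_r$ is smooth and restricts to a resolution of each fiber, apply Siu's invariance of plurigenera \cite{Siu} upstairs, and use canonicality of the $Y_s$ to conclude $h^0(Y_s, rK_{Y_s}) = h^0(\tilde{Y}_s, rK_{\tilde{Y}_s})$ is likewise constant.

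The main obstacle is precisely this invariance-of-plurigenera step in the singular setting; once it is in place, both bulleted hypotheses of \theoremref{gg-index}(i) hold on all of $W_r$, and that theorem immediately yields the desired conclusion that $\shO_Y(2rK_Y) \otimes \alpha$ is globally generated on $g^{-1}(W_r)$ for every $\alpha \in \Pic^0(Y)$.
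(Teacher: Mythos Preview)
Your main argument is correct and matches the paper's proof: reduce to \theoremref{gg-index}(i), identify $\shO_{Y_s}(rK_Y)\cong\shO_{Y_s}(rK_{Y_s})$ on the fibers, and invoke Kawamata's deformation invariance of plurigenera for canonical singularities \cite{Kawamata} together with Grauert's base-change theorem to obtain the required surjectivity. One caution: your ``self-contained alternative'' via a log resolution forces you to shrink $W_r$ to make the resolved family smooth over the base, so it would only yield the conclusion on a possibly smaller open subset and does not prove the corollary as stated; the Kawamata route is the one that actually works over all of $W_r$.
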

\begin{proof} 
Since both sheaves are reflexive and coincide in codimension one, we have an isomorphism
$$\shO_{Y_s}(r K_Y) \, \cong \, \shO_{Y_s}(r K_{Y_s}) \quad \mbox{for all}\quad s\in A,$$
so that the first condition in Theorem \ref{gg-index} is satisfied. 
On the other hand, the hypotheses ensure that over $W_r$ one can apply Kawamata's theorem \cite[Theorem 6]{Kaw:deformations} to obtain that the plurigenera
 $$P_{r}(Y_s) \; = \;  h^0\big(Y_s, \shO_{Y_s}(r K_{Y_s})\big)$$ 
 are constant. By Grauert's Theorem this implies that the second condition in Theorem \ref{gg-index} 
 is satisfied.
  \end{proof}

\bigskip
\footnotesize
\noindent\textit{Acknowledgments.}
We thank Jungkai Chen, Daniel Greb, Christopher Hacon, Zhi Jiang, S\'andor Kov\'acs, Vlad Lazi\'c and Takahiro Shibata for answering our questions and for useful discussions. We also thank the referees for suggesting some expository improvements.
During the preparation of the paper LL benefited from a visit to the Mathematical Institute of the University of Bonn. He thanks Daniel Huybrechts and Luca Tasin for useful conversations and their hospitality. 
LL acknowledges the support of the Simons Foundation.
MP was partially supported by NSF grant DMS-1405516 and by a Simons Fellowship. He thanks Stony Brook University for hospitality during a visit when this project was started. CS was partially supported by NSF grant DMS-1404947 and by a Centennial Fellowship from the American Mathematical Society.
 
%

\end{document}